\newcommand{\abs}[1]{\lvert #1 \rvert}
\newcommand{\R}{\text{R}}
\newcommand{\norm}[1]{\| #1 \|}
\newcommand{\ip}[1]{\langle #1 \rangle}
\def\bfv{{\bf v}}
\renewcommand{\Re}{\operatorname{Re}}
\renewcommand{\Im}{\operatorname{Im}}
\newtheorem{lemma}{Lemma}
\newtheorem{theorem}{Theorem}
\newtheorem*{remark}{Remark}
\begin{document}
\title{Stability and transitions of the second grade Poiseuille flow}

\author[Ozer]{Saadet Ozer}
\address[SO]{Department of Mathematics, Istanbul Technical University, 34469, Istanbul, Turkey}
\email{saadet.ozer@itu.edu.tr}

\author[Sengul]{Taylan Sengul}
\address[TS]{Department of Mathematics, Marmara University, 34722 Istanbul, Turkey}
\email{taylan.sengul@marmara.edu.tr}

\begin{abstract}
In this study we consider the stability and transitions for the Poiseuille flow of a second grade fluid which is a model for non-Newtonian fluids.
We restrict our attention to flows in an infinite pipe with circular cross section that are independent of the axial coordinate.

We show that unlike the Newtonian ($\epsilon=0$) case, in the second grade model ($\epsilon \neq 0$ case),
the time independent base flow exhibits transitions as the Reynolds number $\R$ exceeds the critical threshold $\R_c \approx 4.124 \epsilon^{-1/4}$ where $\epsilon$ is a material constant measuring the relative strength of second order viscous effects compared to inertial effects.

At $\R=\R_c$, we find that generically the transition is either continuous or catastrophic and a small amplitude, time periodic flow with 3-fold azimuthal symmetry bifurcates.
The time period of the bifurcated solution tends to infinity as $\R$ tends to $\R_c$.
Our numerical calculations suggest that for low $\epsilon$ values, the system prefers a catastrophic transition where the bifurcation is subcritical.

We also find that there is a Reynolds number $\R_E$ with $\R_E < \R_c$ such that for $\R<\R_E$, the base flow is globally stable and attracts any initial disturbance with at least exponential speed.
We show that $\R_E \approx 12.87$ at $\epsilon=0$ and $\R_E$ approaches $\R_c$ quickly as $\epsilon$ increases.

\end{abstract}
\keywords{Poiseuille flow, second grade fluids, transitions, linear stability, energy stability, principal of exchange of stabilities}
\maketitle

\section{Introduction}
Certain natural materials manifest some fluid characteristics that can not be represented by well-known linear viscous fluid models.
Such fluids are generally called non-Newtonian fluids.
There are several models that have been proposed to predict the non-Newtonian behavior of various type of materials.
One class of fluids which has gained considerable attention in recent years is the fluids of grade $n$ \cite{Massoudi2008199,Hayat20041621,passerini2000,ozer1999,guptastability,wan2008,fetecau2005}.
A great deal of information for these types of fluids can be found in \cite{dunn1995}.
Among these fluids, one special subclass associated with second order truncations is the so called second-grade fluids.
The constitutive equation of a second grade fluid is given
by the following relation for incompressible fluids:
\begin{equation*}
  {\bf t}=-p{\bf I}+\mu {\bf A}_1+\alpha_1{\bf A}_2+\alpha_2{\bf A}_1^2,
\end{equation*}
where ${\bf t}$  is the stress tensor, $p$ is the pressure, $\mu$ is the classical viscosity, $\alpha_1$ and $\alpha_2$ are the material coefficients.
${\bf A}_1$ and ${\bf A}_2$ are the first two Rivlin-Ericksen tensors defined by
\begin{align*}
& {\bf A}_1=\nabla {\bf v}+\nabla {\bf v}^T, \\
& {\bf A}_2=\dot{{\bf A}}_1+{\bf A}_1 \nabla {\bf v}+\nabla {\bf v}^T {\bf A}_1,
\end{align*}
where ${\bf v}$ is the velocity field and the overdot represents the material derivative with respect to time.
This type of constitutive relation was first proposed in \cite{coleman1960}.
The following conditions:
\[
  \alpha_1+\alpha_2=0, \quad \mu \geq 0,\quad \alpha_1\geq0,
\]
must be satisfied for the second-grade fluid to be entirely consistent with classical thermodynamics and the free energy function achieves its minimum in equilibrium \cite{dunn1974}.

Equation of motion for an incompressible second grade Rivlin Ericksen fluid is represented as:
\begin{equation*}
\begin{aligned}
  & \rho(\bfv_t+{\bf w}\times\bfv+\nabla\frac{|\bfv|^2}{2})=-\nabla p+\mu\Delta\bfv+\alpha[\Delta\bfv_t+\Delta{\bf w}\times
  \bfv+\nabla(\bfv\cdot\Delta\bfv+\frac{1}{4}|{\bf A}_1|^2)],\\
  & \nabla \cdot \bfv = 0.
\end{aligned}
\end{equation*}
where $\rho$ is the density, $\alpha=\alpha_1=-\alpha_2$, represents the second order material constant. Subscript $t$ denotes the
partial derivative with respect to time, $\bf w$ is the usual vorticity vector defined by
\[
  {\bf w}=\nabla\times \bfv.
\]
We next define the non-dimensional variables:
\[
  \bfv^*=\frac{\bfv}{U},\quad
  p^*=\frac{p}{\rho}{U^2},\quad
  t^*=\frac{t U}{L},\quad
  {\bf x}^*=\frac{{\bf x}}{L},
\]
where $U$ and $L$ are characteristic velocity and length, respectively. By letting $\epsilon$ represent the second order non-dimensional material constant which measures the relative strength of  second order viscous effects compared to inertial effects and defining the Reynolds number,
\begin{equation*}
\R=\frac{\rho U L}{\mu},\qquad \epsilon=\frac{\alpha}{\rho L^2},
\end{equation*}
the equation of motion, with asterisks omitted, can be
expressed as:
\begin{equation} \label{field1}
\nabla {\bar p}=\frac{1}{\R}\Delta\bfv+\epsilon(\Delta{\bf
w}\times\bfv+\Delta\bfv_t)-\bfv_t-{\bf w}\times\bfv,
\end{equation}
where the characteristic pressure $\bar p$ is defined as:
\[
  \bar p=p+\frac{|\bfv|^2}{2}-\epsilon (\bfv \Delta\bfv+\frac{1}{4}|{\bf A}_1|^2).
\]
Taking curl of both sides of \eqref{field1} we can simply write the equation of motion as:
\begin{equation}
\label{field2}
\nabla\times[\frac{1}{\R}\Delta\bfv+\epsilon(\Delta{\bf
w}\times\bfv+\Delta\bfv_t)-\bfv_t-{\bf w}\times\bfv]=0,
\end{equation}
which is the field equation of incompressible unsteady second grade Rivlin-Ericksen fluid independent of the choice
of any particular coordinate system.

Now we restrict our interest to flows in a cylindrical tube and assume that the velocity is dependent only on two cross-sectional variables $x$, $y$ and the time $t$. The incompressiblity of the fluid allows us to introduce a stream function $\psi$ such that $\bfv=(\psi_y,-\psi_x,w)$ where $\psi=\psi(t,x,y)$ and also $w=w(t,x,y)$.

We further take the cross section of the cylinder to be a disk with unit radius and consider the no-slip boundary conditions. Then the equations \eqref{field2} admit the following steady state solution
\begin{equation*}
  w_0=\frac{1}{2}(1-x^2-y^2),\qquad \psi_0=0.
\end{equation*}
Here the characteristic velocity has been chosen as $U=\frac{p L^2}{4 \mu}$. First considering the deviation $w'=w-w_0$ and $\psi'=\psi-\psi_0$ and then introducing the polar coordinates $w''(t, r, \theta) = w'(t, r\cos\theta, r\sin\theta)$ and $\psi''(t, r,\theta) = \psi'(t, r\cos\theta, r \sin \theta)$ and ignoring the primes, the equations become
\begin{equation} \label{main}
  \begin{aligned}
    & \frac{\partial}{\partial t}(1-\epsilon \Delta) w = \frac{1}{\R} \Delta w + \R \psi_{\theta} + J (\psi, (1-\epsilon \Delta)w), \\
    & \frac{\partial}{\partial t}\Delta (\epsilon \Delta - 1) \psi = -\frac{1}{\R} \Delta^2 \psi + \epsilon \R \Delta w_{\theta} + J ((1-\epsilon \Delta) \Delta \psi, \psi) + \epsilon J(\Delta w, w),
  \end{aligned}
\end{equation}
in the interior of the unit disk $\Omega$ where $J$ is the advection operator
\[
  J(f, g) = \frac{1}{r} (f_r g_{\theta} - f_{\theta} g_r).
\]
The field equations are supplemented with no-slip boundary conditions for the velocity field
\begin{equation} \label{BC}
  w = \psi = \frac{\partial \psi}{\partial r} = 0 \text{ at } r = 1.
\end{equation}

In this paper, our main aim is to investigate the stability and transitions of \eqref{main} subject to \eqref{BC}.
We first prove that the system undergoes a dynamic transition at the critical Reynolds number $\R_c\approx4.124 \epsilon^{-1/4}$.
As $\R$ crosses $\R_c$ the steady flow loses its stability, and a transition occurs.
If we denote the azimuthal wavenumber of an eigenmode by $m$, then two modes, called critical modes hereafter, with $m=3$ and radial wavenumber $1$, become critical at $\R=\R_c$.
Using the language of dynamic transition theory \cite{ptd}, we can show that the transition is either Type-I(continuous) or Type-II(catastrophic).
In Type-I transitions, the amplitudes of the transition states stay close to the base flow after the transition. Type-II transitions, on the other hand, are associated with more complex dynamical behavior, leading to metastable states or a local attractor far away from the base flow.

We show that the type of transition preferred in system \eqref{main} is determined by the real part of a complex parameter $A$ which only depends on $\epsilon$.
In the generic case of nonzero imaginary part of $A$, there are two possible transition scenarios depending on the sign of the real part of $A$: continuous or catastrophic.
In the continuous transition scenario, a stable, small amplitude, time periodic flow with 3-fold azimuthal symmetry bifurcates on $\R>\R_c$.
The time period of the bifurcated solution tends to infinity as $\R$ approaches $\R_c$, a phenomenon known as infinite period bifurcation \cite{keener1981infinite}.
The dual scenario is the catastrophic transition where the bifurcation is subcritical on $\R<\R_c$ and a repeller bifurcates.
In the non-generic case where the imaginary part of $A$ vanishes, the limit cycle degenerates to a circle of steady states.

The transition number $A$ depends on the system parameter $\epsilon$ in a non-trivial way, hence it is not possible to find an analytical expression of $A$ as a function of $\epsilon$.
So, $A$ must be computed numerically for a given $\epsilon$.
Physically, the transition number can be considered as a measure of net mechanical energy transferred from all modes back to the critical modes which in turn modify the base flow.
We show that $A$ is determined by the nonlinear interactions of the critical modes ($m=3$) with all the modes having $m=0$ and $m=6$.
Moreover, our numerical computations suggest that for low $\epsilon$ fluids ($\epsilon <1$), just a single nonlinear interaction, namely the one with $m=0$ and radial wavenumber $1$ mode, dominates all the rest contributions to $A$.
Our numerical experiments with low $\epsilon$, i.e. $\epsilon < 1$, suggest that the real part of $A$ is
always positive indicating a catastrophic transition at $\R=\R_c$.

We also determine the Reynolds number threshold $\R_E>0$ below which the Poiseuille flow is globally stable, attracting all initial conditions with at least exponential convergence in the $H_0^1(\Omega)$ norm for the velocity.
We find that $R_E \approx 12.87$ when $\epsilon=0$. The gap between $\R_E$ and $\R_c$ shrinks to zero quickly as $\epsilon$ is increased.

The paper is organized as follows: In Section~3, the linearized stability of the system is studied and the principle of exchange of stabilities is investigated. In Section~4, transition theorem of the system is presented with its proof given in Section~5. Section~6 is devoted to the energy stability of the system. In Section~7, we give a detailed numerical analysis. Finally in Section~8 the conclusions and possible extensions of this study are discussed in detail.

\section{The model and the functional setting}
Throughout $\Re z$, $\Im z$, $\overline{z}$ will denote the real part, imaginary part and the complex conjugate of a complex number $z$.

Let
\[
  X_1 = H_0^1(\Omega) \times H_0^2(\Omega), \qquad
  X = L^2(\Omega)\times L^2(\Omega),
\]
where $\Omega$ is the unit disk in $\mathbb{R}^2$, $H_0^1(\Omega)$ and $H_0^2(\Omega)$ denote the usual Sobolev spaces and $L^2(\Omega)$ is the space of Lebesgue integrable functions.

For $\phi_i = \left[ \begin{smallmatrix} w_i(r, \theta) \\ \psi_i(r, \theta) \end{smallmatrix} \right] \in X$, $i=1,2$, the inner product on $X$ is defined by
\begin{equation} \label{inner product}
  \langle \phi_1, \phi_2 \rangle
  =\int_0^{2\pi}\int_0^1 (w_1 \overline{w_2} + \psi_1 \overline{\psi_2})r dr d\theta,
\end{equation}
with the norm on $X$ defined as $\norm{\phi}^2 = \langle \phi, \phi \rangle$.
We define the linear operators $M: X_1 \to X$ and $N: X_1 \to X$ as
\begin{equation} \label{operators}
  M =
  \begin{bmatrix}
    I-\epsilon \Delta & 0 \\
    0 & \Delta (\epsilon \Delta - I)
  \end{bmatrix},
  \quad
  N =
  \begin{bmatrix}
    \frac{1}{\R}\Delta & \R \partial_{\theta} \\
    \epsilon \R \Delta \partial_{\theta} & -\frac{1}{\R}\Delta^2
  \end{bmatrix},
\end{equation}
and the nonlinear operator $H: X_1 \to X$ as
\begin{equation*}
  H(\phi) =
  \begin{bmatrix}
    J(\psi, (1-\epsilon \Delta)w) \\
    J((1-\epsilon\Delta)\Delta w, \psi) + \epsilon J(\Delta w, w)
  \end{bmatrix},
  \qquad \text{for }
      \phi = \begin{bmatrix} w \\ \psi \end{bmatrix} \in X_1.
\end{equation*}
We will use $H$ to denote both the nonlinear operator as well as the bilinear form
\begin{equation} \label{bilinear form}
  H(\phi_I, \phi_J) =
  \begin{bmatrix}
    J(\psi_I, (1-\epsilon \Delta)w_J) \\
    J((1-\epsilon\Delta)\Delta w_I, \psi_J) + \epsilon J(\Delta w_I, w_J)
  \end{bmatrix},
\end{equation}
for $\phi_I = [w_I, \psi_I]^T,$ $\phi_J = [w_J, \psi_J]^T$.
Also we will use the symmetrization of this bilinear form
\begin{equation} \label{Hs}
  H_s(\phi_I, \phi_J) = H(\phi_I, \phi_J) + H(\phi_J, \phi_I).
\end{equation}
Then the equations \eqref{main} and \eqref{BC} can be written in the following abstract form
\begin{equation} \label{abstract equ1}
  M \phi_t = N\phi + H(\phi), \qquad \phi \in X_1,
\end{equation}
with initial condition
\[
  \phi(0) = \phi_0 \in X_1.
\]

\section{Linear Stability}
To determine the transitions of \eqref{abstract equ1}, the first step is to study the eigenvalue problem $N \phi = \beta M \phi$ of the linearized operator.
This is equivalent to the problem
\begin{equation} \label{eigenvalue problem}
  \begin{aligned}
    & \frac{1}{\R} \Delta w + \R \psi_{\theta} = \beta (1 -  \epsilon \Delta) w, \\
    & \frac{1}{\R} \Delta^{2} \psi - \epsilon \R\Delta w_{\theta} = \beta(1- \epsilon \Delta)\Delta\psi,
  \end{aligned}
\end{equation}
subject to the boundary conditions \eqref{BC}.

An interesting feature of the eigenvalue problem is the following.
\begin{lemma} \label{Thm: real eigenvalues}
  Any eigenvalue $\beta$ of \eqref{eigenvalue problem} with boundary conditions \eqref{BC} is real.
\end{lemma}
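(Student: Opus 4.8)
The plan is to separate variables in $\theta$ and then, angular mode by angular mode, reduce the eigenvalue problem to a family of $2\times2$ generalized matrix eigenvalue problems whose characteristic polynomials manifestly have only real roots.

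Since the operators in \eqref{eigenvalue problem} commute with $\partial_\theta$, I would first Fourier decompose and restrict to eigenfunctions $w=W(r)e^{im\theta}$, $\psi=\Psi(r)e^{im\theta}$, $m\in\mathbb Z$. Writing $D_m$ for the radial part of $\Delta$, the operator $L:=-D_m$ (with the Dirichlet condition at $r=1$ and the regularity requirement at $r=0$) is positive definite and self-adjoint on $L^2((0,1),r\,dr)$, with a complete orthonormal eigensystem $\{e_k\}_{k\ge1}$, $Le_k=\mu_ke_k$ and $0<\mu_1\le\mu_2\le\cdots$ --- the Bessel modes. Rewriting \eqref{eigenvalue problem} in terms of $L$ turns the radial system into
\[
-\tfrac1{\R}LW+i\R m\,\Psi=\beta(1+\epsilon L)W,\qquad i\epsilon\R m\,LW+\tfrac1{\R}L^2\Psi=-\beta(1+\epsilon L)L\Psi .
\]

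The point of this form is that every coefficient operator is a polynomial in the single self-adjoint operator $L$, so the system is simultaneously diagonalized by $\{e_k\}$. Expanding $W=\sum_kc_ke_k$, $\Psi=\sum_kd_ke_k$ and matching the coefficients of each $e_k$ yields, for every $k$, the finite-dimensional pencil
\[
\begin{bmatrix}-\mu_k/\R & i\R m\\ i\epsilon\R m\mu_k & \mu_k^2/\R\end{bmatrix}\begin{bmatrix}c_k\\ d_k\end{bmatrix}=\beta\begin{bmatrix}1+\epsilon\mu_k & 0\\ 0 & -(1+\epsilon\mu_k)\mu_k\end{bmatrix}\begin{bmatrix}c_k\\ d_k\end{bmatrix}.
\]
As the eigenfunction is nontrivial, $(c_k,d_k)\neq(0,0)$ for some $k$, and for that $k$ the scalar $\beta$ must kill the corresponding characteristic polynomial. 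A one-line computation gives that this polynomial equals $-\mu_k\big[\big(\tfrac{\mu_k}{\R}+\beta(1+\epsilon\mu_k)\big)^2-\epsilon\R^2m^2\big]$, so that $\tfrac{\mu_k}{\R}+\beta(1+\epsilon\mu_k)=\pm\R\abs{m}\sqrt{\epsilon}$ and therefore
\[
\beta=\frac{\pm\R\abs{m}\sqrt{\epsilon}-\mu_k/\R}{1+\epsilon\mu_k}\in\mathbb R
\]
(equivalently, read as a quadratic in $\beta$ it has discriminant $4(1+\epsilon\mu_k)^2\epsilon\R^2m^2\ge0$). The axisymmetric case $m=0$ needs no separate discussion: the off-diagonal entries then vanish, the same formula applies with the two roots collapsing to $\beta=-\mu_k/(\R(1+\epsilon\mu_k))$, which also recovers the decoupling of the $w$- and $\psi$-equations.

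The Fourier reduction and the check that the radial operators are polynomials in $L$ are routine, as is the (standard elliptic regularity) remark that a smooth eigenfunction may be expanded in the basis $\{e_k\}$. The one ingredient that should be invoked carefully is the spectral statement at the start --- that $L=-D_m$, with the stated endpoint conditions, is self-adjoint with a complete eigenbasis --- which is classical Sturm--Liouville/Bessel theory, i.e.\ $L$ is the $m$-th angular block of the Dirichlet Laplacian on the unit disk. I do not expect a genuine obstacle here; the decisive observation is simply that the $2\times2$ characteristic polynomial factors through the perfect square $\big(\tfrac{\mu_k}{\R}+\beta(1+\epsilon\mu_k)\big)^2$, and it is this that forces the discriminant to be $\ge0$.
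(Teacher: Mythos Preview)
Your reduction to decoupled $2\times 2$ pencils is invalid because you have overlooked the second boundary condition on the stream function: by \eqref{BC} one has $\psi=\psi_r=0$ at $r=1$, so after separation $\Psi(1)=\Psi'(1)=0$. The Dirichlet eigenfunctions $e_k(r)=J_m(\sqrt{\mu_k}\,r)$ of $L=-D_m$ satisfy $e_k(1)=0$ but \emph{not} $e_k'(1)=0$. Consequently the fourth--order operator acting on $\Psi$ in the second equation is the clamped biharmonic (domain $H^2_0$), which is \emph{not} the square of the Dirichlet Laplacian and does \emph{not} share the eigenbasis $\{e_k\}$. Concretely, if $\Psi\in H^2_0$ is expanded as $\sum_k d_k e_k$, you may write $L\Psi=\sum_k \mu_k d_k e_k$ in $L^2$, but you cannot iterate: $(L\Psi)(1)=-\Psi''(1)$ is in general nonzero, so $L\Psi\notin\mathrm{Dom}(L)$ and the identity $L^2\Psi=\sum_k\mu_k^2 d_k e_k$ fails. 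The modes are therefore \emph{not} decoupled, and your closed-form $\beta=(\pm\R|m|\sqrt\epsilon-\mu_k/\R)/(1+\epsilon\mu_k)$ is not the spectrum of \eqref{eigenvalue problem}; compare with the paper's actual eigensolutions \eqref{eigensolutions mneq0}, which are genuine three-term combinations $c_1 r^m+c_2 J_m(\sqrt\lambda\,r)+c_3 J_m(\sqrt\mu\,r)$ and obey the transcendental dispersion relation \eqref{dispersion relation}.

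The paper's proof sidesteps this obstruction entirely: it pairs the first equation with $\overline{\Delta w}$ and the second with $\overline{\psi}$, integrates by parts using \eqref{BC}, and forms the combination $-\epsilon\times(\text{first})+(\text{second})$. The two coupling integrals then collapse (after one more integration by parts) into a single \emph{real} quantity, yielding an identity whose imaginary part reads $\Im(\beta)\big(\epsilon^2\|\Delta w\|^2+\epsilon\|\Delta\psi\|^2+\epsilon\|\nabla w\|^2+\|\nabla\psi\|^2\big)=0$. No spectral decomposition is needed, and the clamped boundary condition on $\psi$ is used only to justify the integrations by parts.
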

\begin{proof}
  Multiplying the first equation of \eqref{eigenvalue problem} by $\overline{\Delta w}$ and the second equation by $\overline{\psi}$, integrating over the domain $\Omega$, we obtain after integration by parts
  \begin{equation} \label{proof: eq1}
    (\frac{1}{\R} + \beta \epsilon) \norm{\Delta w}^2 + \R \int_\Omega \psi_{\theta} \overline{\Delta w} dr d\theta = - \beta \norm{\nabla w}^2,
  \end{equation}
  and
  \begin{equation} \label{proof: eq2}
    -(\frac{1}{\R} + \beta \epsilon) \norm{\Delta \psi}^2 + \epsilon \R \int_{\Omega} \Delta w_{\theta} \overline{\psi} dr d\theta = \beta \norm{\nabla \psi}^2.
  \end{equation}
  Let
  \[
    \begin{aligned}
      & \mathcal{A}_1 = \epsilon \norm{\Delta w}^2 + \norm{\Delta \psi}^2, \\
      & \mathcal{A}_2 = \epsilon \norm{\nabla w}^2 + \norm{\nabla \psi}^2 \\
      & \mathcal{A}_3 = 2\epsilon \int_\Omega \Delta w_{\theta} \overline{\psi} dr d\theta.
    \end{aligned}
  \]
  Now consider $-\epsilon \times$\eqref{proof: eq1} + \eqref{proof: eq2} which is
  \begin{equation} \label{proof: eq3}
    -(\frac{1}{\R} + \beta \epsilon) \mathcal{A}_1 + \R Re(\mathcal{A}_3) = \beta \mathcal{A}_2,
  \end{equation}
  after integrating by parts.
  Taking the imaginary part of \eqref{proof: eq3} gives
  \[
    \Im(\beta) (\epsilon \mathcal{A}_1 + \mathcal{A}_2) = 0.
  \]
  Since $(\epsilon \mathcal{A}_1 + \mathcal{A}_2) \ge 0$, we must have $\Im(\beta) = 0$.
\end{proof}

Now we turn to the problem of determining explicit expressions of the solutions of the eigenvalue problem of the linearized operator.  Thanks to the periodicity in the $\theta$ variable, for $m \in \mathbb{Z}$ and $j \in \mathbb{Z}_+$, we denote the eigenvectors of \eqref{eigenvalue problem} by
\begin{equation} \label{ansatz}
  \phi_{m,j}(r, \theta) = e^{i m \theta} \varphi_{m,j}(r), \qquad
  \varphi_{m,j}(r)=
  \begin{bmatrix}
    w_{m,j}(r) \\
    \psi_{m,j}(r)
  \end{bmatrix}
\end{equation}
with corresponding eigenvalues $\beta_{m,j}$. Let us set the eigenvalues $\beta_{m,j}$ for $m\neq0$ to be ordered so that
\[
  \beta_{m,1}  \ge \beta_{m2} \ge \cdots
\]
for each $m \in \mathbb{Z}$.

Plugging the ansatz \eqref{ansatz} into \eqref{eigenvalue problem} and omitting $j$ we obtain two ODE's in the $r$-variable.
\begin{equation} \label{seperated EVP}
  \begin{aligned}
    & \left(\frac{1}{\R} + \beta \epsilon \right) \Delta_m w_m + i m \R \psi_m = \beta w_m, \\
    -& \left( \frac{1}{\R} + \beta \epsilon \right) \Delta_m^2 \psi_m + i \epsilon m \R \Delta_m w_m = -\beta \Delta_m \psi_m,
  \end{aligned}
\end{equation}
with boundary conditions
\begin{equation} \label{seperated EVP BC}
  w_m(1) = \psi_m(1) = \psi_m'(1) = 0
\end{equation}
where
\begin{equation*}
\begin{aligned}
  & \Delta_m = \frac{d^2}{dr^2} + \frac{1}{r} \frac{d}{dr} - \frac{m^2}{r^2}.
\end{aligned}
\end{equation*}

When $m=0$, the equations \eqref{seperated EVP} become decoupled and we easily find that there are two sets of eigenpairs given by
\begin{equation*}
\begin{aligned}
  & w^1_{0,j}(r) = J_0(\alpha_{0,j} r), \qquad \psi^1_{0,j}(r) = 0, \qquad \beta^1_{0,j} = \frac{-\alpha_{0,j}^2}{\R(1+\epsilon \alpha_{0,j}^2)}, \\
  & w^2_{0,j}(r) = 0, \qquad \psi^2_{0,j}(r) = J_0(\alpha_{1,j} r) - J_0(\alpha_{1,j}), \qquad \beta^2_{0,j} = \frac{-\alpha_{1,j}^2}{\R(1+\epsilon \alpha_{1,j}^2)},
\end{aligned}
\end{equation*}
where $\alpha_{k,j}$ is the $j$th zero of the $k$th Bessel function $J_k$. In particular, $\beta_{0,j} < 0$ for all $\epsilon$ and for all $\R$.

In the $m \neq 0$ case, as the eigenvalues are real by Lemma~\ref{Thm: real eigenvalues}, the multiplicity of each eigenvalue $\beta=\beta_{m,j}=\beta_{-m,j} \in \mathbb{R}$ is generically two with corresponding eigenvectors $\phi_{m,j}$ and $\phi_{-m,j} = \overline{\phi_{m,j}}$.

Solving the first equation of \eqref{seperated EVP} for $\psi_m$ and plugging it into the second equation of \eqref{seperated EVP} yields a sixth order equation
\begin{equation} \label{compact equation for w_m}
  ( \lambda  + \Delta_m ) ( \mu + \Delta_m) \Delta_m w_m = 0
\end{equation}
where
\begin{equation} \label{lambda mu}
  \lambda = \frac{\sqrt{\epsilon} m \R - \beta_{m,j}}{\frac{1}{\R} + \epsilon \beta_{m,j}}, \qquad
  \mu = \frac{-\sqrt{\epsilon} m \R - \beta_{m,j}}{\frac{1}{\R} + \epsilon \beta_{m,j}}.
\end{equation}
It is easy to check that $\lambda=0$ or $\mu=0$ yield only trivial solutions to equations \eqref{seperated EVP BC} and \eqref{compact equation for w_m}. So we will assume $\lambda \neq 0$, and $\mu \neq 0$.

When $m>0$, the general solution of \eqref{compact equation for w_m} is
\[
  w_m = c_1 r^m + c_2 J_m(\sqrt{\lambda} r) + c_3 J_m(\sqrt{\mu} r) + c_4 r^{-m} + c_5 Y_m(\sqrt{\lambda}r) + c_6 Y_m(\sqrt{\mu}r),
\]
where $J_m$ and $Y_m$ are the Bessel functions of the first and the second kind respectively.

The boundedness of the solution and its derivatives at $r=0$ implies that $c_4 = c_5 = c_6 = 0$ and     we get the eigensolutions
\begin{equation} \label{eigensolutions mneq0}
  \begin{aligned}
    & w_{m,j}(r) =
    \begin{cases}
      c_1 r^m + c_2 J_m(\sqrt{\lambda} r) + c_3 J_m(\sqrt{\mu} r) & \text{if } m>0\\
      \overline{w_{-m,j}}, & \text{if } m<0
    \end{cases} \\
    & \psi_{m,j}(r) =
    \begin{cases}
      d_1 r^m + d_2 J_m(\sqrt{\lambda} r) + d_3 J_m(\sqrt{\mu} r) & \text{if } m>0\\
      \overline{\psi_{-m,j}}, & \text{if } m<0
    \end{cases}
  \end{aligned}
\end{equation}
with $d_1 = \dfrac{-i \beta_{m,j} c_1}{m \R}$, $d_2 = -i \sqrt{\epsilon} c_2$, $d_3 = i \sqrt{\epsilon} c_3$.


The eigenvalues and two of the three coefficients $c_1$, $c_2$, $c_3$ in \eqref{eigensolutions mneq0} are determined by the boundary conditions \eqref{seperated EVP BC} which form a linear system for the coefficients $c_k$. This system has a nontrivial solution only when the dispersion relation
\begin{equation} \label{dispersion relation 1}
  \begin{vmatrix}
  1 & J_m(\sqrt{\lambda}) & J_m(\sqrt{\mu}) \\
  \beta & \sqrt{\epsilon} m \R J_m(\sqrt{\lambda} ) & -\sqrt{\epsilon} m \R J_m(\sqrt{\mu} ) \\
  \beta m & \sqrt{\epsilon} m \R \sqrt{\lambda} J'_m(\sqrt{\lambda} ) & -\sqrt{\epsilon} m \R \sqrt{\mu}J'_m(\sqrt{\mu} )
  \end{vmatrix}
  = 0,
\end{equation}
is satisfied.

Using the identity
\[
  J_m'(z) = \frac{m}{z}J_m(z) - J_{m+1}(z)
\]
we can show that \eqref{dispersion relation 1} is equivalent to
\begin{equation} \label{dispersion relation}
  \sqrt{\lambda} J_m(\sqrt{\lambda}) J_{m+1}(\sqrt{\mu}) + \sqrt{\mu} J_m(\sqrt{\mu})J_{m+1}(\sqrt{\lambda}) = 0,
\end{equation}
where $J_m$ is the Bessel function of the first kind of order $m$.

To compute the critical Reynolds number $\R_c$, we set $\beta=0$ in \eqref{dispersion relation} which, after some manipulation, yields
\begin{equation} \label{dispersion relation beta=0 case}
  I_m(\sqrt{\lambda}) J'_m(\sqrt{\lambda}) - J_m(\sqrt{\lambda})I'_m(\sqrt{\lambda}) = 0.
\end{equation}
Once \eqref{dispersion relation beta=0 case} is solved for $\lambda$, the corresponding Reynolds number is obtained by the relation $\lambda = \sqrt{\epsilon}m \R^2$ (from \eqref{lambda mu} when $\beta=0$).
We note here that this is the exact same equation as the one obtained in \cite{ozer1999}.
For each $m$, the equation \eqref{dispersion relation beta=0 case} has infinitely many solutions $\{\lambda_{m, j}\}_{j=1}^{\infty}$ where $\lambda_{m,j}$ increases with $j$ and $\lambda_{m,j} \to \infty$ as $j\to\infty$. Letting $\R_{m,j} = \epsilon^{-1/4} (\lambda_{m,j}/m)^{1/2}$,
 we have $\beta_{m,j}=0$ when $\R=\R_{m,j}$.
\begin{table}[t]
  \begin{tabular}{|l||l|l|l|l|l|l|}
    \hline
    $m$ & 1 & 2 & 3 & 4 & 5 & 6 \\
    \hline
    $\lambda_{m,1}$ & 21.260 & 34.877 & 51.030 & 69.665 & 90.739 & 114.21 \\
    \hline
    $(\lambda_{m,1}/m)^{1/2}$ & 4.610 & 4.175 & 4.124 & 4.173 & 4.260 & 4.36\\
    \hline
  \end{tabular}
  \caption{The smallest positive root $\lambda_{m,1}$ of \eqref{dispersion relation beta=0 case} and $(\lambda_{m,1}/m)^{1/2}$ for $m=1,\dots,5$.}
  \label{Table:lambdamj}
\end{table}

We define the critical Reynolds number
\[
  \R_c = \min_{m,j} \R_{m,j} = \min_m \R_{m,1} = \epsilon^{-1/4}\min_{m \in \mathbb{Z}_{+}} \sqrt{\frac{\lambda_{m,1}}{m}},
\]
so that $\beta_{m,j}\le0$ if $\R \le \R_c$.
There has been recent progress on the properties of zeros of \eqref{dispersion relation beta=0 case}.
In \cite{baricz2015cross}, the estimate
\begin{equation} \label{lambdam1 bounds}
\begin{split}
  & \frac{2^4 (m+1)(m+2)(m+3)\sqrt{(m+4)(m+5)}}{\sqrt{5m+15}}< \lambda_{m,1}^2 \\
  & \qquad \qquad < \frac{2^4(m+1)(m+2)(m+3)(m+4)(m+5)}{5m+17},
\end{split}
\end{equation}
on $\lambda_{m,1}$ is obtained.

Using \eqref{lambdam1 bounds}, we can show that the upper bound for $\sqrt{\lambda_{m,1}/m}$ for $m=3$ is less than its
lower bound for all $m\ge7$ which implies that $\sqrt{\lambda_{3,1}/3} < \sqrt{\lambda_{m,1}/m}$ for all $m\ge7$. Thus $\R_c$ is minimized for some $m$ smaller than $7$ and hence can be found by brute force.
Looking at Table~\ref{Table:lambdamj}, we find that the expression above is indeed minimized when $m=3$ and obtain the relation
\eqref{Rc}, i.e. $\R_c = \R_{3,1} \approx 4.124 \epsilon^{-1/4}$.

Defining the left hand side of \eqref{dispersion relation} as $\omega(\beta, \R)$, the equation \eqref{dispersion relation} becomes $\omega(\beta,\R) = 0$. By the implicit function theorem, this defines $\beta_{3,1}(\R)$ for $\R$ near $\R_c$ with $\beta_{3,1}(\R_c)=0$. With the aid of symbolic computation, we can compute
\[
  \frac{d\beta_{3,1}}{d \R} \mid_{\R=\R_c} =
  \dfrac{\partial \omega/\partial R}{\partial \omega/ \partial \beta} \mid_{\R=\R_c, \beta=\beta_{3,1}} =
  \frac{0.12 \sqrt{\epsilon}}{0.02+\epsilon} >0.
\]
Thus we have proved the Principal of Exchange of Stabilities which we state below.
\begin{theorem} \label{Thm: PES}
For $\epsilon \neq 0$, let
\begin{equation} \label{Rc}
   \R_c = \epsilon^{-1/4} \sqrt{\frac{\lambda_{3,1}}{3}} \approx 4.124 \epsilon^{-1/4}.
\end{equation}
Then
  \begin{equation} \label{PES}
    \begin{aligned}
      & \beta_{3,1}(\R) = \beta_{-3,1}(\R)
      \begin{cases}
        < 0 & \text{if } \R < \R_c \\
        = 0 & \text{if } \R = \R_c \\
        > 0 & \text{if } \R > \R_c
      \end{cases} \\
      & \beta_{m,j}(\R_c) < 0 , \qquad \text{if } (m,j) \neq (\pm 3,1).
    \end{aligned}
  \end{equation}
\end{theorem}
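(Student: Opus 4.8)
The plan is to assemble the computations preceding the statement into a proof in four steps. \emph{Step 1 (reduction to a scalar dispersion relation).} By Lemma~\ref{Thm: real eigenvalues} every eigenvalue of \eqref{eigenvalue problem} is real, so the ordered real sequences $\beta_{m,j}$ are well defined. Separating variables as in \eqref{ansatz} turns the eigenvalue problem into the ODE system \eqref{seperated EVP} with boundary conditions \eqref{seperated EVP BC}. For $m=0$ the system decouples and the explicit formulas give $\beta_{0,j}<0$ for every $\epsilon$ and $\R$, so these modes never become critical. For $m\neq 0$, eliminating $\psi_m$ yields the sixth-order equation \eqref{compact equation for w_m}; regularity at $r=0$ forces the form \eqref{eigensolutions mneq0}, and imposing \eqref{seperated EVP BC} leaves a $3\times 3$ homogeneous linear system for the coefficients that has a nontrivial solution precisely when $\omega(\beta,\R)=0$, where $\omega$ denotes the left-hand side of \eqref{dispersion relation}. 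Since $\phi_{-m,j}=\overline{\phi_{m,j}}$, also $\beta_{-m,j}=\beta_{m,j}$, which gives the first equality in \eqref{PES}.

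\emph{Step 2 (locating the critical Reynolds number).} Putting $\beta=0$ in $\omega=0$ reduces it to \eqref{dispersion relation beta=0 case}, whose positive roots in $\lambda$ form the increasing sequence $\{\lambda_{m,j}\}_j$; by \eqref{lambda mu}, at $\beta=0$ one has $\lambda=\sqrt{\epsilon}\,m\,\R^2$, so for fixed $m$ the branches $\beta_{m,j}(\cdot)$ vanish exactly at the Reynolds numbers $\R_{m,k}=\epsilon^{-1/4}(\lambda_{m,k}/m)^{1/2}$. Monotonicity of $\lambda_{m,j}$ in $j$ gives $\min_{m,j}\R_{m,j}=\min_m\R_{m,1}$; the Baricz bounds \eqref{lambdam1 bounds} imply that the upper estimate for $\sqrt{\lambda_{3,1}/3}$ lies below the lower estimate for $\sqrt{\lambda_{m,1}/m}$ for all $m\ge 7$, so only $m=1,\dots,6$ need be compared, and Table~\ref{Table:lambdamj} shows the strict minimum occurs only at $m=3$. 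Hence $\R_c=\R_{3,1}$ as in \eqref{Rc}, the minimum is attained exactly at $(m,j)=(\pm3,1)$, and $\R_{m,j}>\R_c$ for every other $(m,j)$.

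\emph{Steps 3 and 4 (sign of the branches and transversal crossing).} Each $\beta_{m,j}(\cdot)$ is continuous in $\R$. For $\R<\R_c$ one has $\sqrt{\epsilon}\,m\,\R^2<\lambda_{m,1}$, so $\beta=0$ is not an eigenvalue, i.e. no branch has a zero on $(0,\R_c]$ unless $(m,j)=(\pm3,1)$; rewriting \eqref{proof: eq3} as $\beta=(-\R^{-1}\mathcal{A}_1+\R\,\Re\mathcal{A}_3)/(\mathcal{A}_2+\epsilon\mathcal{A}_1)$ and bounding $\mathcal{A}_3$ by $\mathcal{A}_1$ through Cauchy--Schwarz and a Poincar\'e inequality shows $\beta_{m,j}(\R)<0$ for all sufficiently small $\R$, hence, by continuity, $\beta_{m,j}(\R)<0$ for all $\R<\R_c$ and $\beta_{m,j}(\R_c)<0$ whenever $(m,j)\neq(\pm3,1)$. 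Finally $\omega(0,\R_c)=0$; a symbolic computation shows $\partial_\beta\omega\neq 0$ there, so the implicit function theorem produces a smooth $\beta_{3,1}(\R)$ near $\R_c$ with $\beta_{3,1}(\R_c)=0$ and $d\beta_{3,1}/d\R|_{\R=\R_c}=0.12\sqrt{\epsilon}/(0.02+\epsilon)>0$; together with the previous sentence this yields $\beta_{3,1}(\R)<0$ for $\R<\R_c$ and $\beta_{3,1}(\R)>0$ for $\R>\R_c$ near $\R_c$, completing \eqref{PES}.

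\emph{Expected difficulties.} The reduction in Step 2 is elementary but requires the inequality that the upper bound for $\sqrt{\lambda_{3,1}/3}$ beats the lower bound for $\sqrt{\lambda_{m,1}/m}$ for every $m\ge 7$, together with the numerically observed separation of the six values $\sqrt{\lambda_{m,1}/m}$, $m\le 6$, in Table~\ref{Table:lambdamj}. In Step 3 the small-$\R$ estimate needs some care because $\mathcal{A}_3=2\epsilon\int_\Omega\Delta w_\theta\overline{\psi}\,dr\,d\theta$ is indefinite, and one must justify continuity of the branches $\beta_{m,j}(\cdot)$ through possible multiplicity changes; strictly speaking, extending $\beta_{3,1}(\R)>0$ to \emph{all} $\R>\R_c$ would require monotonicity of this branch rather than just the sign of its derivative at $\R_c$. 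The genuine crux is Step 4: checking $\partial_\beta\omega\neq 0$ at $(0,\R_c)$ and evaluating $\partial_\R\omega/\partial_\beta\omega$ in closed form means differentiating the Bessel-function identity \eqref{dispersion relation} and substituting the value $\lambda_{3,1}$, so this step is best carried out with computer algebra, and the whole dichotomy hinges on that derivative being positive.
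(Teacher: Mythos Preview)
Your proposal is correct and follows essentially the same route as the paper: separation of variables and the dispersion relation \eqref{dispersion relation}, the $\beta=0$ reduction \eqref{dispersion relation beta=0 case} together with the Baricz bounds \eqref{lambdam1 bounds} and Table~\ref{Table:lambdamj} to locate $\R_c$ at $m=3$, and the implicit function theorem plus symbolic differentiation of $\omega$ to get the transversal crossing $d\beta_{3,1}/d\R|_{\R=\R_c}>0$.

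The one place you go beyond the paper is your Step~3, where you argue from the energy identity \eqref{proof: eq3} that $\beta_{m,j}(\R)<0$ for small $\R$ and then invoke continuity of the branches to propagate this sign up to the first zero $\R_{m,j}$. The paper simply asserts ``$\beta_{m,j}\le 0$ if $\R\le\R_c$'' after defining $\R_c=\min_{m,j}\R_{m,j}$, without spelling out why the branches cannot be positive before their first zero; your small-$\R$ estimate is exactly the natural way to close that gap. You also correctly flag, as the paper does not, that the conclusion $\beta_{3,1}(\R)>0$ for \emph{all} $\R>\R_c$ is only established locally by the derivative computation; the paper's argument and yours give this only in a neighborhood of $\R_c$.
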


Note that Theorem~\ref{Thm: PES} is in contrast to the $\epsilon=0$ case where the basic flow is linearly stable for all Reynolds numbers. This can be seen easily by noting that when $\epsilon=0$, the inner product of the second equation in \eqref{eigenvalue problem} with $\psi$ yields
  \begin{equation} \label{eps=0 estimate}
    \norm{\Delta \psi}^2 = - \beta \R \norm{\nabla \psi}^2.
  \end{equation}
With the Dirichlet boundary conditions, from \eqref{eps=0 estimate}, it follows easily that $\beta < 0$ if $\psi \neq 0$.

For the proof and presentation of Theorem~\ref{thm: main}, we also need to solve the eigenvalue problem of the adjoint linear operator which yields adjoint modes orthogonal to the eigenmodes of the linear operator.
Adjoint problem is obtained by taking the inner product of \eqref{abstract equ1} by $\phi^{\ast}$ and moving the derivatives via integration by parts onto $\phi^{\ast}$ by making use of the boundary conditions.
This yields the following adjoint problem
\begin{equation*}
  N^{\ast} \phi^{\ast} = \beta^{\ast} M^{\ast} \phi^{\ast},
\end{equation*}
where
\begin{equation*}
  M^{\ast} = M =
  \begin{bmatrix}
    I-\epsilon \Delta & 0 \\
    0 & \Delta (\epsilon \Delta - I)
  \end{bmatrix},
  \qquad
  N^{\ast} =
  \begin{bmatrix}
    \frac{1}{\R}\Delta & -\epsilon \R \Delta \partial_{\theta} \\
    \R \partial_{\theta} & -\frac{1}{\R}\Delta^2
  \end{bmatrix},
  \qquad
  \phi^{\ast} =
  \begin{bmatrix}
    w^{\ast} \\ \psi^{\ast}
  \end{bmatrix},
\end{equation*}
and $w^{\ast}$, $\phi^{\ast}$ satisfies the same boundary conditions \eqref{BC} as $w$ and $\phi$.
We denote the adjoint eigenvectors by $\phi^{\ast}_{m,j} = \begin{bmatrix} w_{m,j}^{\ast} & \psi_{m,j}^{\ast} \end{bmatrix}^T$ and we also have the adjoint eigenvalues $\beta_{m,j}^{\ast} = \beta_{m,j}$. The reason we introduce the adjoint eigenmodes is to make use of the following orthogonality relation
\begin{equation} \label{orthogonality}
  \ip{\phi_{m,i}, M \phi_{n,j}^{\ast}} = 0 \quad \text{if } (m,i) \neq (n,j).
\end{equation}

\section{Dynamic Transitions}
Let us briefly recall here the classification of dynamic transitions and refer to \cite{ptd} for a detailed rigorous discussion. For $\epsilon \neq 0$, as the critical Reynolds number $\R_c$ is crossed, the principle of exchange of stabilities \eqref{PES} dictates that the nonlinear system always undergoes a dynamic transition, leading to one of the three type of transitions, Type-I(continuous), II(catastrophic) or III(random). On $\R>\R_c$, the transition states stay close to the base state for a Type-I transition and leave a local neighborhood of the base state for a Type-II transition. For Type-III transitions, a local neighborhood of the base state is divided into two open regions with a Type-I transition in one region, and a Type-II transition in the other region. Type-II and Type-III transitions are associated with more complex dynamics behavior.

Below we prove that for \eqref{main}, only two scenarios are possible. In the first scenario, the system exhibits a Type-I (or continuous) transition and a stable attractor will bifurcate on $\R>\R_c$ which attracts all sufficiently small disturbances to the Poiseuille flow. We prove that this attractor is homeomorphic to the circle $S^1$ and is generically a periodic orbit. The Figure~\ref{fig:bifsol} shows the stream function of the bifurcated time-periodic solution. The dual scenario is that the system exhibits a Type-II (or catastrophic) transition.

The type of transition at $\R=\R_c$ depends on the transition number
\begin{equation} \label{A}
  A = \sum_{j=1}^{\infty} A_{0,j} + A_{6,j},
\end{equation}
where $A_{m,j}$ represent the nonlinear interaction of the critical modes with the mode with azimuthal wavenumber $m$ and radial wavenumber $j$. The formulas for $A_{m,j}$ are
\begin{equation} \label{A0 A6}
  \begin{aligned}
    & A_{0,j} = \frac{1}{\langle \phi_{3,1}, M \phi_{3,1}^{\ast} \rangle} \tilde{\Phi}_{0,j} \langle H_s(\phi_{3,1}, \phi_{0,j}), \phi_{3,1}^{\ast} \rangle \\
    & A_{6,j} = \frac{1}{\langle \phi_{3,1}, M \phi_{3,1}^{\ast} \rangle} \tilde{\Phi}_{6,j} \langle H_s(\overline{\phi_{3,1}}, \phi_{6,j}), \phi_{3,1}^{\ast} \rangle \\
    & \tilde{\Phi}_{0,j} = \frac{1}{-\beta_{0,j} \langle \phi_{0,j}, M\phi_{0,j}^{\ast} \rangle} \langle H_s(\phi_{3,1}, \overline{\phi_{3,1}}), \phi_{0,j}^{\ast} \rangle, \\
    & \tilde{\Phi}_{6,j} = \frac{1}{-\beta_{6,j} \langle \phi_{6,j}, M\phi_{6,j}^{\ast} \rangle} \langle H(\phi_{3,1}, \phi_{3,1}), \phi_{6,j}^{\ast} \rangle, \\
  \end{aligned}
\end{equation}

To recall the meaning of various terms in \eqref{A0 A6}, $\beta_{n,j}$ is the $j$th eigenvalue of the $n$th azimuthal mode with corresponding eigenvector $\phi_{n,j}$ and adjoint eigenvector $\phi_{n,j}^{\ast}$. $\langle \cdot, \cdot \rangle$ denotes the inner product \eqref{inner product}, $H$ denotes the bilinear form \eqref{bilinear form}, $H_s$ is its symmetrization \eqref{Hs}, $M$ is the linear operator defined by \eqref{operators}.

\begin{theorem} \label{thm: main}
  If $\epsilon \neq 0$ then the following statements hold true.
  \begin{enumerate}
    \item If $\Re(A) < 0$ then the transition at $\R=\R_c$ is Type-I and an attractor $\Sigma_R$ bifurcates on $\R>\R_c$ which is homeomorphic to $S^1$. If $\Im(A)=0$ then  $\Sigma_R$ is a cycle of steady states. If $\Im(A) \neq 0$ then $\Sigma_R$ is the orbit of a stable limit cycle given by
    \begin{equation} \label{bifurcated solution}
      \begin{bmatrix}
        w_{\text{per}}(t,r,\theta) \\
        \psi_{\text{per}}(t,r,\theta)
      \end{bmatrix}
      = 2 \sqrt{\frac{-\beta_{3,1}}{\Re(A)}}\Re\left(\exp\left(\frac{2 i \pi t}{T}  \right) e^{i 3 \theta}
      \begin{bmatrix}
        w_{3,1}(r) \\ \psi_{3,1}(r)
       \end{bmatrix} \right)
      + o(\sqrt{-\beta_{3,1}}),
    \end{equation}
    with time period
    \begin{equation} \label{time period}
      T = \dfrac{2 \pi \Re(A)}{\Im(A) \beta_{3,1}}.
    \end{equation}
    Here $w_{3,1}$ and $\psi_{3,1}$ are the vertical velocity and stream function of the eigenmode of the linear operator with corresponding eigenvalue $\beta_{3,1}$.

    \item If $\Re(A) > 0$ then the transition at $\R=\R_c$ is Type-II and a repeller $\Sigma_R$ bifurcates on $\R<\R_c$. If $\Im(A) = 0$ then $\Sigma_R$ is a cycle of steady states. If $\Im(A) \neq 0$ then $\Sigma_R$ is the orbit of of an unstable limit cycle  given by \eqref{bifurcated solution} with $\beta_{3,1}$ replaced by $-\beta_{3,1}$.
  \end{enumerate}
\end{theorem}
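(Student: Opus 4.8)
\emph{Overall strategy.}
The plan is to apply the dynamic transition framework of \cite{ptd} to the abstract system \eqref{abstract equ1}. The operator $M$ in \eqref{operators} is an isomorphism $X_1\to X$ (both diagonal blocks are positive: $\ip{(I-\epsilon\Delta)w,w}=\norm{w}^2+\epsilon\norm{\nabla w}^2$ and $\ip{\Delta(\epsilon\Delta-I)\psi,\psi}=\epsilon\norm{\Delta\psi}^2+\norm{\nabla\psi}^2$), so \eqref{abstract equ1} is equivalent to $\phi_t=M^{-1}N\phi+M^{-1}H(\phi)$; the linear part has compact resolvent and only finitely many eigenvalues in any right half-plane. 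By Lemma~\ref{Thm: real eigenvalues} and Theorem~\ref{Thm: PES}, at $\R=\R_c$ precisely the pair $\beta_{3,1}=\beta_{-3,1}$ sits at $0$ and crosses transversally ($d\beta_{3,1}/d\R|_{\R_c}>0$), while all other $\beta_{m,j}(\R_c)<0$; moreover \eqref{orthogonality} shows this eigenvalue is semisimple, so there is a spectral gap and a two‑dimensional center eigenspace $E_c=\mathrm{span}\{\phi_{3,1},\overline{\phi_{3,1}}\}$. Hence there is a $C^\infty$ center manifold $\phi=z\phi_{3,1}+\bar z\,\overline{\phi_{3,1}}+\Phi(z,\bar z)$, $z\in\mathbb C$, $\Phi=O(|z|^2)$, and everything reduces to the flow on it.

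\emph{The reduced equation and identification of $A$.}
Rotation $\theta\mapsto\theta+\theta_0$ is a symmetry of \eqref{main}–\eqref{BC} which multiplies $\phi_{m,j}$ by $e^{im\theta_0}$; on $E_c$ it acts as $z\mapsto e^{3i\theta_0}z$, so the reduced vector field is equivariant under the full circle action $z\mapsto e^{i\alpha}z$. A monomial $z^p\bar z^q$ is allowed only if $p-q=1$, which eliminates every quadratic term and forces
\[
  \dot z=\beta_{3,1}(\R)\,z+A\,|z|^2z+O(|z|^5).
\]
To match the cubic coefficient with \eqref{A}–\eqref{A0 A6} I would use the quadratic approximation of $\Phi$. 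With $x=z\phi_{3,1}+\bar z\,\overline{\phi_{3,1}}$, the quadratic part of $H(x,x)$ decomposes by azimuthal wavenumber into a $|z|^2$ term proportional to $H_s(\phi_{3,1},\overline{\phi_{3,1}})$ (wavenumber $0$) and a $z^2$ term proportional to $H(\phi_{3,1},\phi_{3,1})$ (wavenumber $6$), plus conjugates. Projecting the center-manifold equation $M\Phi_t=N\Phi+P_sH(x+\Phi)$ onto the stable modes $\phi_{0,j}$, $\phi_{6,j}$ via their adjoints and dropping $\Phi_t$ at this order gives $\Phi=|z|^2\sum_j\tilde\Phi_{0,j}\phi_{0,j}+z^2\sum_j\tilde\Phi_{6,j}\phi_{6,j}+(\mathrm{conj.})+O(|z|^3)$ with $\tilde\Phi_{0,j},\tilde\Phi_{6,j}$ exactly as in \eqref{A0 A6}. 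Substituting back into $\ip{H(x+\Phi,x+\Phi),\phi_{3,1}^{\ast}}/\ip{\phi_{3,1},M\phi_{3,1}^{\ast}}$ and keeping the wavenumber‑$3$, $|z|^2z$ contribution — which comes from $H_s(\phi_{3,1},\Phi_{0,\bullet})$ and $H_s(\overline{\phi_{3,1}},\Phi_{6,\bullet})$ — produces $A=\sum_j(A_{0,j}+A_{6,j})$; convergence of the $j$‑series follows from the decay of the relevant inner products.

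\emph{Planar analysis and transfer to the PDE.}
Writing $z=\rho e^{i\varphi}$ the reduced system becomes $\dot\rho=\beta_{3,1}\rho+\Re(A)\rho^3+O(\rho^5)$ and $\dot\varphi=\Im(A)\rho^2+O(\rho^4)$. If $\Re(A)<0$: for $\R>\R_c$ (so $\beta_{3,1}>0$) the amplitude equation has an attracting fixed radius $\rho_\ast=\sqrt{-\beta_{3,1}/\Re(A)}$, i.e. an attracting invariant circle; when $\Im(A)=0$ it consists of equilibria, and when $\Im(A)\neq0$ the phase drifts at the constant rate $\Im(A)\rho_\ast^2$, giving a stable limit cycle whose period is \eqref{time period} and which tends to infinity as $\beta_{3,1}\to0$. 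If $\Re(A)>0$ the same fixed radius exists only for $\R<\R_c$ and is repelling. The dynamic transition theorems of \cite{ptd} then lift this to \eqref{abstract equ1}: $\Sigma_R$ is homeomorphic to $S^1$, attracts (resp. repels) all small disturbances when $\Re(A)<0$ (resp. $\Re(A)>0$), giving Type‑I (resp. Type‑II). Finally, inserting $z(t)=\rho_\ast e^{2\pi i t/T}$ into $\phi=2\Re\!\big(z\,e^{3i\theta}\varphi_{3,1}(r)\big)+O(\rho_\ast^2)$ and using $\rho_\ast^2=-\beta_{3,1}/\Re(A)=O(|\beta_{3,1}|)$ yields \eqref{bifurcated solution}; the Type‑II case is identical with $\beta_{3,1}\mapsto-\beta_{3,1}$.

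\emph{Main obstacle.}
The symmetry normal-form argument and the polar analysis are routine. The real work — and the place where errors are likely — is the center-manifold computation of $A$: setting up the adjoint projections consistently for the non-self-adjoint, $M$-weighted problem, justifying that the quadratic approximation of $\Phi$ suffices (so that $\Phi_t$ can be neglected at cubic order), proving the $j$-series for $A$ converge, and checking that the structural hypotheses of the abstract theorem in \cite{ptd} (discrete spectrum, spectral gap at $\R_c$, semisimplicity of $\beta_{3,1}$, $C^\infty$ dependence from the implicit function theorem) are genuinely satisfied here.
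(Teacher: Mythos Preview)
Your proposal is correct and follows essentially the same route as the paper: center-manifold reduction onto the two-dimensional space spanned by $\phi_{\pm 3,1}$, computation of the quadratic part of $\Phi$ via adjoint projections onto the $m=0$ and $m=6$ modes to obtain \eqref{A0 A6}, derivation of the reduced equation $\dot z=\beta_{3,1}z+A|z|^2z$, polar analysis, and an appeal to the attractor bifurcation theorem in \cite{ptd}. Your explicit use of $S^1$-equivariance under $z\mapsto e^{i\alpha}z$ to predetermine the normal form is a slight streamlining --- the paper obtains the same vanishing of quadratic terms computationally via the Fourier orthogonality \eqref{H-orthogonality} --- but the argument is otherwise identical.
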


\begin{figure}[t]
  \centering
    \includegraphics[scale=1]{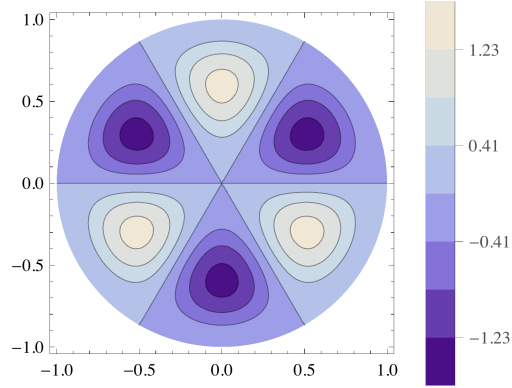}
  \caption{The time periodic stream function $\psi_{per}$ given by \eqref{bifurcated solution} which rotates in time, clockwise if $T>0$ and counterclockwise if $T<0$.}
  \label{fig:bifsol}
\end{figure}
\begin{remark}
  In the generic case of $\Im(A) \neq 0$, Theorem~\ref{thm: main} guarantees the existence of a stable (unstable)
  bifurcated periodic solution on $\R>\R_c$ ($\R<\R_c$). By \eqref{time period}, the period of the bifurcated solution approaches to infinity as $\R \downarrow \R_c$ ($\R \uparrow \R_c$).
\end{remark}
\section{Proof Of Theorem~\ref{thm: main}}
As is standard in the dynamic transition approach, the proof of Theorem~\ref{thm: main} depends on the reduction of the field equations \eqref{abstract equ1} on to the center manifold.

Let us denote the (real) eigenfunctions and adjoint eigenfunctions corresponding to the critical eigenvalue $\beta_{3,1}$ by
\begin{align*}
  & e_1(r, \theta) = \Re(\phi_{3,1}(r, \theta)) \qquad
  e_2(r, \theta) = \Im (\phi_{3,1}(r, \theta)) \\
  & e_1^{\ast}(r, \theta) = \Re(\phi^{\ast}_{3,1}(r, \theta)) \qquad
  e_2^{\ast}(r, \theta) = \Im (\phi^{\ast}_{3,1}(r, \theta))
\end{align*}
By the spectral theorem, the spaces $X_1$ and $X$ can be decomposed into the direct sum
\[
  X_1 = E_1 \oplus E_2, \qquad
  X = E_1 \oplus \overline{E_2},
\]
where
\begin{align*}
  & E_1 = \text{span} \{e_1, e_2\}, \\
  & E_2 = \{u \in X_1  \mid \langle u, e_i^{\ast} \rangle = 0 \quad i=1,2\}, \\
  & \overline{E_2} = \text{closure of $E_2$ in $X$}.
\end{align*}
Since $M:X_1 \to X$ is an invertible operator, we can define $L=M^{-1}N$ and $G=M^{-1}H$. Now the abstract equation \eqref{abstract equ1} can be written as
\begin{equation} \label{abstract equ 2}
  \frac{d\phi}{d t} = L \phi + G(\phi).
\end{equation}
The linear operator $L$ in \eqref{abstract equ 2} can be decomposed into
\begin{align*}
  & L = J \oplus \mathcal{L}, \\
  & J = L \mid_{E_1}: E_1 \to E_1, \\
  & \mathcal{L} = L \mid_{E_2} : E_2 \to \overline{E_2},
\end{align*}
Since the eigenvalues are real, we have $L e_k = \beta_{3,1} e_k$ for $k=1,2$. Hence we have $J = \beta_{3,1}(\R) I_2$, where $I_2$ is the $2\times 2$ identity matrix. We know that when $J$ is diagonal, we have the following approximation of the center manifold function $\Phi: E_1 \to E_2$ near $\R \approx \R_c$; see \cite{ptd}.
\begin{equation} \label{CM approximation formula}
  -\mathcal{L} \Phi(x) = P_2 G_k(x) + o(k),
\end{equation}
The meaning of the terms in the above formula \eqref{CM approximation formula} are as follows.
\begin{itemize}
  \item[a)] $o(k) = o(\norm{x}^k) + O(\abs{\beta_{3, 1}(\R)} \norm{x}^k) \quad \text{as } \R \to \R_c, \, x\to 0,$
  \item[b)] $P_2 : X \to E_2$ is the canonical projection,
  \item[c)] $x$ is the projection of the solution onto $E_1$,
  \begin{equation} \label{center part}
    x(t, r, \theta) = x_1(t) e_1(r, \theta) + x_2(t) e_2(r, \theta)
  \end{equation}
  \item[d)] $G_k$ denotes the lowest term of the Taylor expansion of $G(u)$ around $u=0$. In our case $G$ is bilinear and thus $k=2$ in  and $G=G_k$.
\end{itemize}

It is easier to carry out the reduction using complex variables. So we write \eqref{center part} as
\begin{equation} \label{center part 2}
  x(t, r, \theta) =
  z(t) \phi_{3,1}(r, \theta) + \overline{z(t) \phi_{3,1}(r, \theta)}
\end{equation}
where $z(t) = \frac{1}{2}(x_1(t) - i x_2(t))$. Let us expand the center manifold function by
\begin{equation} \label{CM expansion}
  \Phi = \sum_{(n,j) \neq (\pm 3,1)} \Phi_{n,j}(t) \phi_{n,j}(r, \theta)
\end{equation}
Plugging the above expansion into the center manifold approximation formula \eqref{CM approximation formula},  taking inner product with $M \phi_{n,j}^{\ast}$
and using the orthogonality \eqref{orthogonality} we have
\begin{equation} \label{CM coefficients formula}
  \Phi_{n,j} = \frac{1}{-\beta_{n,j} \ip{\phi_{n,j}, M \phi_{n,j}^{\ast}} } \ip{H(x), \phi_{n,j}^{\ast}} + o(2).
\end{equation}
Since $H$ is bilinear,
\begin{equation} \label{Hx}
  H(x) = H(z \phi_{3,1} + \overline{z \phi_{3,1}}) = z^2 H(\phi_{3,1}, \phi_{3,1}) + z\overline{z} H_s(\phi_{3,1}, \overline{\phi_{3,1}}) + \overline{z}^2 H(\overline{\phi_{3,1}}, \overline{\phi_{3,1}}),
\end{equation}
with the operator $H_s$ defined by \eqref{Hs}.
Thanks to the orthogonality $\int_0^{2\pi}e^{i n \theta} e^{-im \theta} d\theta = 2\pi \delta_{nm}$, we have
\begin{equation} \label{H-orthogonality}
  \langle H(\phi_{m_1, i_1}, \phi_{m_2, i_2}), \phi_{m_3, i_3}^{\ast} \rangle = 0, \qquad \text{if } m_1+m_2 \neq m_3
\end{equation}
With $\overline{\phi_{3,1}}=\phi_{-3,1}$, this implies
\begin{equation} \label{secondary modes}
  \ip{H(x), \phi_{n,j}^{\ast}} = 0 \qquad \text{if } n \notin \{0, -6, 6\}.
\end{equation}
According to \eqref{secondary modes}, \eqref{CM expansion} and \eqref{CM coefficients formula},
\begin{equation} \label{CM expansion 2}
  \Phi(t) = \sum_{j=1}^{\infty} \Phi_{0,j}(t) \phi_{0,j} +
  \Phi_{6,j}(t) \phi_{6,j} + \Phi_{-6,j}(t) \phi_{-6,j} + o(2)
\end{equation}
That is the center manifold is $o(2)$ in eigendirections whose azimuthal wavenumber is not $0$, $6$ or $-6$.
The equation \eqref{Hx} implies that
\begin{equation} \label{cm-comp 1}
  \begin{aligned}
    & \langle H(x), \phi_{0,j}^{\ast} \rangle =
    z \overline{z} \langle H_s(\phi_{3,1}, \overline{\phi_{3,1}}) , \phi_{0,j}^{\ast} \rangle, \\
    & \langle H(x), \phi_{6,j}^{\ast} \rangle =
    z^2 \langle H(\phi_{3,1}, \phi_{3,1}), \phi_{6,j}^{\ast} \rangle, \\
    & \langle H(x), \phi_{- 6,j}^{\ast} \rangle =
    \overline{\langle H(x), \phi_{6,j}^{\ast} \rangle}.
  \end{aligned}
\end{equation}
By \eqref{cm-comp 1} and \eqref{CM coefficients formula}, we get the coefficients of the center manifold in \eqref{CM expansion 2}
\begin{equation} \label{cm-variables}
  \begin{aligned}
    & \Phi_{0,j} = z \overline{z} \tilde{\Phi}_{0,j} + o(2), \\
    & \Phi_{6,j} = z^2 \tilde{\Phi}_{6,j} + o(2), \\
    & \Phi_{-6,j} = \overline{\Phi_{6,j}},
  \end{aligned}
\end{equation}
where $\tilde{\Phi}_{0,j}$ and $\tilde{\Phi}_{6,j}$ are given by \eqref{A0 A6}.

As the dynamics of the system is enslaved to the center manifold for small initial data and for Reynolds numbers close to the critical Reynolds number $\R_c$, it is sufficient to investigate the dynamics of the main equation \eqref{abstract equ1} on the center manifold. For this reason we take
\[
  \phi(t) = x(t) + \Phi(t),
\]
in \eqref{abstract equ1} to obtain
\begin{equation} \label{pre-reduced}
  \frac{dz}{dt} M \phi_{3,1} + \frac{d\overline{z}}{dt} M \overline{\phi_{3,1}} = z N \phi_{3,1} + \overline{z} N \overline{\phi_{3,1}} + H(x+ \Phi).
\end{equation}
To project the above equation onto the center-unstable space $E_1$, we take inner product of \eqref{pre-reduced} with $\phi_{3,1}^{\ast}$ and use
\[
  \langle M \overline{\phi_{3,1}}, \phi_{3,1}^{\ast} \rangle = 0,
\]
and
\[
  N\phi_{3,1} = \beta_{3,1} M \phi_{3,1}, \qquad
  N\overline{\phi_{3,1}} = \beta_{3,1} M \overline{\phi_{3,1}},
\]
to get the following reduced equation of \eqref{abstract equ1}.
\begin{equation} \label{reduced-1}
  \frac{dz}{dt} = \beta_{3,1}(\R) z + \frac{1}{\langle \phi_{3,1}, M \phi_{3,1}^{\ast} \rangle} \langle H(x + \Phi), \phi_{3,1}^{\ast}\rangle.
\end{equation}
The reduced equation \eqref{reduced-1} describes the transitions of the full nonlinear system for $\R$ near $\R_c$ and small initial data. At this stage, the nonlinear term in \eqref{reduced-1} is too complicated to explicitly describe the transition. Thus we need to determine the lowest order expansion in $z$ of the nonlinear term $\langle H(x + \Phi), \phi_{3,1}^{\ast}\rangle$. By the bilinearity of $H$,
\begin{equation} \label{nonlinear comp1}
  \langle H(x + \Phi), \phi_{3,1}^{\ast}\rangle =
  \langle H(x), \phi_{3,1}^{\ast}\rangle + \langle H_s(x, \Phi), \phi_{3,1}^{\ast}\rangle +\langle H(\Phi), \phi_{3,1}^{\ast}\rangle.
\end{equation}
The first term in \eqref{nonlinear comp1} vanish by \eqref{secondary modes} and the last term in \eqref{nonlinear comp1} is $o(3)$ as $H(\Phi) = o(3)$ since $\Phi=O(2)$ and $H$ is bilinear. Thus \eqref{nonlinear comp1} becomes
\begin{equation} \label{nonlinear comp2}
  \langle H(x + \Phi), \phi_{3,1}^{\ast}\rangle =
  \langle H_s(x, \Phi), \phi_{3,1}^{\ast} \rangle + o(3).
\end{equation}
Using the expression \eqref{center part 2} for $x$, we can rewrite \eqref{nonlinear comp2} as
\begin{equation} \label{comp1}
\begin{aligned}
  \langle H(x + \Phi), \phi_{3,1}^{\ast}\rangle = z \langle H_s(\phi_{3,1}, \Phi), \phi_{3,1}^{\ast} \rangle
  + \overline{z}
  \langle H_s(\overline{\phi_{3,1}}, \Phi), \phi_{3,1}^{\ast} \rangle + o(3).
\end{aligned}
\end{equation}
Now we use the expansion \eqref{CM expansion 2} of $\Phi$ in \eqref{comp1} and the orthogonality relations
\[
  \langle H_s(\phi_{3,1}, \phi_{n,j}), \phi_{3,1}^{\ast} \rangle = 0 \qquad \text{if } n \neq 0,
\]
and
\[
  \langle H_s(\overline{\phi_{3,1}}, \phi_{n,j}), \phi_{3,1}^{\ast} \rangle = 0 \qquad \text{if } n \neq 6,
\]
which follow from \eqref{H-orthogonality} to arrive at
\begin{equation} \label{nonlinear comp3}
  \langle H(x + \Phi), \phi_{3,1}^{\ast}\rangle = \sum_{j=1}^{\infty} z \Phi_{0,j} \langle H_s(\phi_{3,1}, \phi_{0,j}), \phi_{3,1}^{\ast} \rangle + \overline{z} \Phi_{6,j} \langle H_s(\overline{\phi_{3,1}}, \phi_{6,j}), \phi_{3,1}^{\ast} \rangle + o(3).
\end{equation}
Defining the coefficient $A$ by \eqref{A} and making use of \eqref{cm-variables} and \eqref{nonlinear comp3}, we write down the approximate equation of \eqref{reduced-1} as
\begin{equation} \label{reduced equation}
  \frac{dz}{dt} = \beta_{3,1}(\R) z + A \abs{z}^2 z + o(3).
\end{equation}

To finalize the proof, there remains to analyze the stability of the zero solution of \eqref{reduced equation} for small initial data. In polar coordinates $z(t) = \abs{z}e^{i \gamma}$, \eqref{reduced equation} is equivalent to
\begin{equation} \label{reduced-final}
  \frac{d\abs{z}}{dt} = \beta_{3,1}(\R) \abs{z} + \Re(A) \abs{z}^3 + o(\abs{z}^3), \qquad
  \frac{d\gamma}{dt} = \Im(A) \abs{z}^2 + o(\abs{z}^3).
\end{equation}
For $\R>\R_c$ as $\beta_{3,1}>0$, it is clear from \eqref{reduced-final} that $z=0$ is unstable if $\Re(A)>0$ and is locally stable if $\Re(A)<0$. In the latter case, the bifurcated solution is
\[
  z(t) = \sqrt{\frac{-\beta_{3,1}(\R)}{\Re(A)}} \exp \left(-i \frac{\Im(A)}{\Re(A)}\beta_{3,1}(\R) t \right).
\]

Thus to determine the stability of the bifurcated state as $\R$ crosses the critical Reynolds number $\R_c$, we need to compute the sign of the real part of $A$.

The details of the assertions in the proof of Theorem~\ref{thm: main} follow from the attractor bifurcation theorem in \cite{ptd}. That finishes the proof.

\section{Energy Stability}
In this section we study the energy stability of the equations \eqref{main} which is related to at least exponential decay of solutions to the base flow. We refer to \cite{straughan2013energy} for a multitude of applications of this theory.

For $f$, $g$, $h$ in $H_0^1(\Omega)$, the following two properties of $J$ follows from integrating by parts
\begin{equation} \label{orthogonality property 1 of J}
  \begin{aligned}
    & \langle J(f, g), h \rangle = - \langle J(f, h), g \rangle, \\
    & \langle J(f, g), h \rangle = 0, \quad \text{if $f$, $g$ and $h$ are linearly dependent}.
  \end{aligned}
\end{equation}

Taking inner product of the first equation in \eqref{main} with $w$ and the second equation in \eqref{main} with $\psi$ and using the property \eqref{orthogonality property 1 of J}, we have
\begin{equation} \label{energy 0-1}
  \frac{1}{2}\frac{d}{dt} (\norm{w}^2 + \epsilon \norm{\nabla w}^2) = -\frac{1}{\R} \norm{\nabla w}^2 + \R \ip{\psi_{\theta},w} + \epsilon\ip{J(\Delta w, \psi),w},
\end{equation}
\begin{equation} \label{energy 0-2}
  \frac{1}{2}\frac{d}{dt} (\norm{\nabla \psi}^2 + \epsilon \norm{\Delta \psi}^2) = -\frac{1}{\R} \norm{\Delta \psi}^2 + \epsilon \R \ip{\Delta w_{\theta},\psi} + \epsilon \ip{J(\Delta w, w),\psi}.
\end{equation}

Adding equations \eqref{energy 0-1} and \eqref{energy 0-2} and using \eqref{orthogonality property 1 of J} once again, we arrive at
\begin{equation} \label{energy 0-3}
  \frac{1}{2}\frac{d}{dt} \mathcal{E}(t) =  -\frac{1}{\R} \mathcal{I}_1(t) + \R \mathcal{I}_2(t),
\end{equation}
where
\begin{equation*}
  \begin{aligned}
    & \mathcal{E} = \norm{w}^2 + \epsilon \norm{\nabla w}^2 + \norm{\nabla \psi}^2 + \epsilon \norm{\Delta \psi}^2 \\
    & \mathcal{I}_1 = \norm{\nabla w}^2 + \norm{\Delta \psi}^2 \\
    & \mathcal{I}_2 = \ip{\psi_{\theta},w-\epsilon \Delta w}.
  \end{aligned}
\end{equation*}

Letting
\begin{equation} \label{RE}
  \frac{1}{\R_E^2} = \max_{X_1 \setminus\{0\}} \frac{\mathcal{I}_2}{\mathcal{I}_1},
\end{equation}
we have by \eqref{energy 0-3}
\begin{equation} \label{energy-1}
  \frac{d}{dt} \mathcal{E} \le -2\R(\frac{1}{\R^2} - \frac{1}{\R_E^2}) \mathcal{I}_1.
\end{equation}
Since $\mathcal{I}_1 \ge 0$ and $\mathcal{I}_2 = 0$ whenever $\psi_{\theta}=0$, $\R_E$ must be nonnegative.

Since $w \in H_0^1(\Omega)$ and $\nabla \psi \in H_0^1(\Omega)$, by the Poincaré inequality, $\abs{\nabla w}^2 \ge \eta_1 \abs{w}^2$ and $\abs{\Delta \psi}^2 \ge \eta_1 \abs{\nabla \psi}^2$, where $\eta_1 \approx 5.78$ is the first eigenvalue of negative Laplacian on $\Omega$. Thus we have
\begin{equation} \label{energy-2}
  \mathcal{I}_1 \ge \frac{\eta_1}{1+ \epsilon \eta_1}\mathcal{E}
\end{equation}
Now let
\[
  c_R = \frac{2 \R \eta_1}{1+ \epsilon \eta_1}  (\frac{1}{\R^2} - \frac{1}{\R_E^2}),
\]
and suppose that $\R<\R_E$. Then $c_R>0$ and by \eqref{energy-1} and \eqref{energy-2},
\[
  \frac{d}{dt} \mathcal{E}(t) \le - c_R \mathcal{E}(t).
\]
Hence the Gronwall's inequality implies
\[
  \mathcal{E}(t) \le e^{-c_R t}\mathcal{E}(0).
\]
In particular, for $\R \le \R_E$, $c_R>0$ and any initial disturbance in $X_1 $ will decay to zero implying the unconditional stability of the basic steady state solution.

Using the variational methods to maximize the quantity in \eqref{RE}, we find the resulting Euler-Lagrange equations as
\begin{equation} \label{EL-0}
  \begin{aligned}
    & \Delta w + \frac{\R^2}{2} (1- \epsilon \Delta) \psi_{\theta} = 0, \\
    & \Delta^2 \psi + \frac{\R^2}{2}(1 - \epsilon \Delta) w_{\theta} = 0.
  \end{aligned}
\end{equation}
Considering \eqref{EL-0} as an eigenvalue problem with $\R$ playing the role of the eigenvalue, $\R_E$ is just the smallest positive eigenvalue. To solve \eqref{EL-0}, we plug the ansatz $w = e^{i m \theta} w_m(r)$ and $\psi = e^{i m \theta} \psi_m(r)$ into \eqref{EL-0} which yields
\begin{equation} \label{EL-1}
  \begin{aligned}
    & \Delta_m w_m + i \frac{m \R^2}{2}(1-\epsilon \Delta_m) \psi_m = 0 \\
    & \Delta_m^2 \psi_m + i \frac{m \R^2}{2}(1-\epsilon \Delta_m) w_m = 0,
  \end{aligned}
\end{equation}
where $\Delta_m = \frac{d^2}{dr^2} + \frac{1}{r} \frac{d}{dr} - \frac{m^2}{r^2}$.
Taking $\Delta_m$ of the second equation above and using the first equation, we obtain
\begin{equation} \label{EL-2}
  p(\Delta_m) \psi_m = 0
\end{equation}
where
\begin{equation*}
  p(\xi) = \xi^3 + \frac{m^2 \R^4}{4}  (1-\epsilon \xi)^2.
\end{equation*}
Let $\xi_1$, $\xi_2$ and $\xi_3$ be the three roots of $p$. As the discriminant of $p$ is negative, one root is real and the others are complex conjugate. The factorization of the operator in \eqref{EL-2} gives
\begin{equation} \label{EL-3}
  (\Delta_m - \xi_1)(\Delta_m - \xi_2)(\Delta_m - \xi_3) \psi_m = 0.
\end{equation}

The general solution of \eqref{EL-3} is
\begin{equation*}
  \psi_m = \sum_{k=1}^3 c_k I_m(\sqrt{\xi_k} r) + \tilde{c}_k K_m(\sqrt{\xi_k} r),
\end{equation*}
where $I_m$ and $K_m$ are the modified Bessel functions.
The boundedness of the solution at $r=0$ necessitates $\tilde{c}_k = 0$ for $k=1,2,3$. Thus
\begin{equation*}
  \psi_m = \sum_{k=1}^3 c_k I_m(\sqrt{\xi_k} r).
\end{equation*}
Now applying the operator $1-\epsilon \Delta_m$ to the second equation in \eqref{EL-1} and using the first equation of \eqref{EL-1}, we obtain $p(\Delta_m) w_m = 0$, i.e. the same equation \eqref{EL-2}, this time for $w_m$. Hence
\begin{equation*}
  w_m = \sum_{k=1}^3 d_k I_m(\sqrt{\xi_k} r).
\end{equation*}
The first equation in \eqref{EL-1} gives the relation $d_k = -i \frac{m \R^2}{2} (\epsilon-\xi_k^{-1}) c_k$ between $c_k$'s and $d_k$'s.
Now the boundary conditions $w_m(1)=\psi_m(1)=\psi_m'(1)=0$ constitute a homogeneous system of three linear equations for the coefficients $c_k$'s.
The existence of nontrivial solutions is then equivalent to the vanishing determinant of this system which after some manipulation becomes
\begin{equation} \label{EL-determinant}
  \begin{vmatrix}
    \xi_1^{-1} I_m(\sqrt{\xi_1}) & \xi_2^{-1} I_m(\sqrt{\xi_2}) & \xi_3^{-1} I_m(\sqrt{\xi_3}) \\
    I_m(\sqrt{\xi_1}) & I_m(\sqrt{\xi_2}) & I_m(\sqrt{\xi_3}) \\
    \sqrt{\xi_1}I_m'(\sqrt{\xi_1}) & \sqrt{\xi_2}I_m'(\sqrt{\xi_2}) & \sqrt{\xi_3}I_m'(\sqrt{\xi_3}) \\
  \end{vmatrix} = 0.
\end{equation}

For fixed $m$ and $\epsilon$, the equation \eqref{EL-determinant} has infinitely many solutions $\R=\R_{m,j}(\epsilon)$, $j \in \mathbb{Z}_+$. Letting
\begin{equation} \label{Rm}
  \R_m = \min_{j \in \mathbb{Z}_+} \R_{m,j},
\end{equation}
the critical Reynolds number is given by
\begin{equation} \label{RE num}
  \R_E = \min_{m \in \mathbb{Z}_+} \R_m.
\end{equation}
We present the numerical computations of $\R_E$ in the next section.

\section{Numerical Results and Discussion}

\subsection{Transitions at the critical Reynolds number $\R_c$.}
As shown by Theorem~\ref{thm: main}, the transition at the critical Reynolds number $\R_c \approx 4.124 \epsilon^{-1/4}$ is dictated by the real part of $A$ given by \eqref{A}.

Let us define
\begin{equation*}
  A = \lim_{N \to \infty} A^N,
\end{equation*}
where $A^N$ is the series in \eqref{A} truncated at $N$, i.e. $A^N= \sum_{j=1}^N A_{0,j} + A_{6,j}$ and $A_{m,j}$ represents the nonlinear interaction of the critical modes and the mode with azimuthal wavenumber $m$ and radial wavenumber $j$ given by \eqref{A0 A6}.

A symbolic computation software is used to compute $A^N$. We present our numerical computations of $A^N$ in Figure~\ref{fig:A vs eps} for $\epsilon=$ $1,\, 10^{-1},\, 10^{-2},\, 10^{-3}$ and $1\le N\le10$.
The imaginary part of $A$ is nonzero and we are only interested in the sign of the real part of $A$ to determine the type of transition according to Theorem~\ref{thm: main}.
To simplify the presentation, we scale all $A^N$'s so that $\abs{\Re(A^1)} = 1$. The plots in Figure~\ref{fig:A vs eps} suggest that the convergence of the truncations $A^N \to A$ is rapid for small $\epsilon$ but a higher order truncation (larger $N$) is necessary to accurately resolve $A$ for larger $\epsilon$. For $\epsilon < 10^{-1}$, even $A^1$ is a good approximation to determine the sign of $A$.
For example, the relative error for approximating $A$ with $A^1$ is approximately \%2 for $\epsilon=10^{-3}$ and increases to approximately \%18 for $\epsilon=1$.

We also measure the relative strength of the nonlinear interactions, i.e. the ratio
\begin{equation} \label{BN}
  B^N = \frac{ \sum_{j=1}^N \Re(A_{6,j})}{\sum_{j=1}^N \Re(A_{0,j})} ,
\end{equation}
in Figure~\ref{fig:BN}.

It is seen from Figure~\ref{fig:BN} that the contribution from the modes with $m=0$ dominates when $\epsilon$ is low. But as $\epsilon$ increases, the contribution from modes with $m=6$ start to become significant. For example, for $\epsilon=10^{-3}$, $B^N$ approaches $8\times 10^{-5}$, for $\epsilon=10^{-2}$, $B^N$ approaches $-2\times 10^{-3}$ and for $\epsilon=10^{-1}$, $B^N$ approaches $-8\times10^{-3}$. In particular, for low $\epsilon$, we have $A \approx A^1 \approx A_{0,1}$.

More significantly, our numerical results presented in Figure~\ref{fig:A vs eps} show that the real part of $A$ is positive for $\epsilon =10^{-3}, 10^{-2}, 10^{-1}, 1$, meaning that the transition is catastrophic by Theorem~\ref{thm: main}.
Thus the system moves to a flow regime away from the base Poiseuille flow and the system exhibits complex dynamical behavior for $\R>\R_c$.
\begin{figure*}[t]
    \centering
    \begin{tikzpicture}
    \begin{axis}[
        xlabel=$N$,
        ylabel =$\Re(A^N)$,
        legend style={at={(.9,0.5)}},
        ymin=0.8,
        ymax=1.05]
    \addplot coordinates {(1, 1.) (2, 1.00089) (3, 1.01997) (4, 1.02112) (5, 1.02068) (6, 1.02065) (7, 1.02065) (8, 1.02064) (9, 1.02064) (10, 1.02064)};

    \addplot coordinates {(1, 1.) (2, 0.997596) (3, 1.00991) (4, 1.01071) (5, 1.00798) (6, 1.00777) (7, 1.00771) (8, 1.00769) (9, 1.00767) (10, 1.00767)};

    \addplot[mark=o] coordinates {(1, 1.) (2, 0.989211) (3,
    0.975863) (4, 0.974511) (5,   0.959659) (6, 0.958498) (7, 0.958186) (8, 0.958068) (9, 0.958014) (10, 0.957986)};

    \addplot coordinates {(1, 1.) (2, 0.961376) (3, 0.872072) (4, 0.865378) (5, 0.824498) (6, 0.821406) (7, 0.820612) (8, 0.820488) (9, 0.820427) (10, 0.820395)};
    \legend{$\epsilon=0.001$,$\epsilon=0.01$,$\epsilon=0.1$, $\epsilon=1$}
    \end{axis}
    \end{tikzpicture}
    \caption{The plots of $\Re(A^N)$ for different $\epsilon$ values. All $A^N$'s are scaled so that $\abs{\Re(A^1)} = 1$.}
    \label{fig:A vs eps}
\end{figure*}
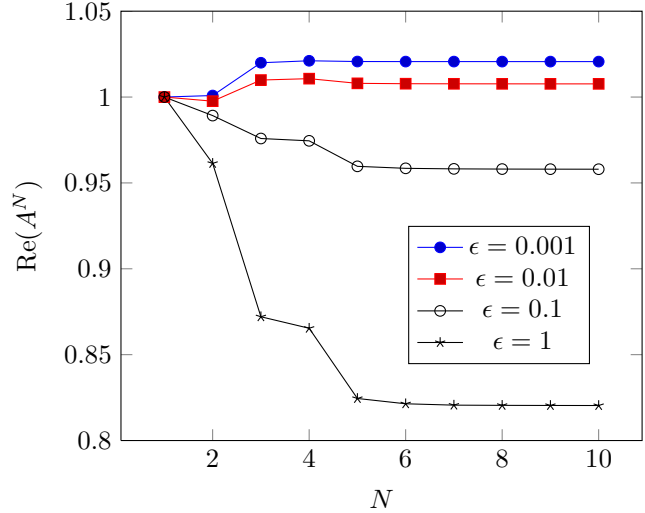

\begin{figure*}[t]
    \centering
    \begin{tikzpicture}
    \begin{axis}[
        xlabel=$N$,
        ylabel =$B^N$,
        legend style={at={(.9,0.5)}}
        ]
    \addplot coordinates {(1, 0.0000751446) (2, 0.0000574423) (3, 0.0000597475) (4,  0.0000867288) (5, 0.000087839) (6, 0.0000885365) (7, 0.0000872772) (8, 0.0000870878) (9, 0.0000866053) (10, 0.0000865978)};

    \addplot coordinates {(1, -0.00217747) (2, -0.00231445) (3, -0.00232501) (4, -0.0023344) (5, -0.00234389) (6, -0.00235146) (7, -0.00235559) (8, -0.00235673) (9, -0.00235925) (10, -0.00236105)};

    \addplot coordinates {(1, -0.00716897) (2, -0.00768623) (3, -0.00790949) (4, -0.00793913) (5, -0.00807035) (6, -0.0080853) (7, -0.00809099) (8, -0.00809388) (9, -0.00809573) (10, -0.00809694)};
    \legend{$\epsilon=0.001$,$\epsilon=0.01$,$\epsilon=0.1$}
    \end{axis}
    \end{tikzpicture}
    \caption{$B^N$, defined by \eqref{BN}, measures the relative strength of the nonlinear interactions of the critical modes with $m=6$ modes to the $m=0$ modes.}
    \label{fig:BN}
\end{figure*}
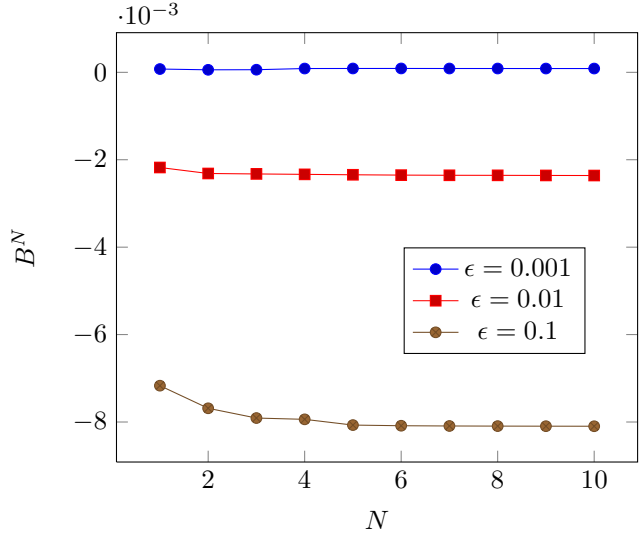

\subsection{Determination of Energy Stability Threshold $\R_E$.}
With a standard numerics package, $\R_m(\epsilon)$ in \eqref{Rm} can be computed for given $m$ and $\epsilon$.
Then by \eqref{RE num}, $\R_E$ is computed by taking minimum in \eqref{RE num} over all (computed) $\R_m$. In Table~\ref{Table:Rm RE Rc}, it is shown that for $\epsilon=10^{-4}$, and $\epsilon=10^{-3}$, $\R_E$ is obtained for $m=1$ while for $\epsilon = 10^{-2}$ and $\epsilon=2\times 10^{-2}$, $\R_E$ is obtained for $m=2$.
In Figure~\ref{fig:Rm vs eps}, we plot $\R_m$ ($m=1,2,3$) for $0 < \epsilon \le 5\times 10^{-2}$.
We see that the curves $\R_1$ and $\R_2$ intersect approximately at $\epsilon=0.009$ while $\R_2$ and $\R_3$ intersect approximately at $\epsilon=0.024$.
As $\epsilon$ is increased, the value of $m$ for which $\R_E$ is minimized also increases.
For higher values of $m$, the roots of the determinant in \eqref{EL-determinant} becomes increasingly hard to find.

In Table~\ref{Table:Rm RE Rc}, the last column gives the value of $\R_c$, the linear instability threshold, computed by \eqref{Rc}.
Note that the interval $[\R_E, \R_c]$ consists of Reynolds numbers for which the base flow is either not globally stable or globally stable but not not exponentially attracting.
We plot the $R_E$ and $R_c$ data from Table~\ref{Table:Rm RE Rc} in Figure~\ref{fig:RE vs Rc} which shows that this interval shrinks rapidly as $\epsilon$ is increased.

\begin{table}[ht]
  \begin{tabular}{|c||c|c|c|c|c||c||c|}
    \hline
    $\epsilon$ & $\R_1$ & $\R_2$ & $\R_3$ & $\R_4$ & $\R_5$ & $\R_E$ & $\R_c$\\
    \hline
    \hline
    $0$ & 12.87 & 13.49 & 14.84 & 16.37 & 17.95 & 12.87 & $\infty$ \\
    \hline
    $10^{-4}$ & 12.86 & 13.47 & 14.81 & 16.32 & 17.88 & 12.86 & 42.4 \\
    \hline
    $10^{-3}$ & 12.77 & 13.31 & 14.54 & 15.90 & 17.29 & 12.77 & 23.84\\
    \hline
    $10^{-2}$ & 11.99 & 11.95 & 12.44 & 12.95 & 13.39 & 11.95 & 13.40 \\
    \hline
    $2\times10^{-2}$ & 11.26 & 10.83 & 10.91 & 11.03 & 11.13 & 10.83 & 11.27 \\
    \hline
  \end{tabular}
  \caption{$\R_m$ denotes the first positive root of \eqref{EL-determinant} and $\R_E$ is the minimum of $\R_m$ taken over all $m$. $\R_c$ is the linear instability threshold.}
  \label{Table:Rm RE Rc}
\end{table}

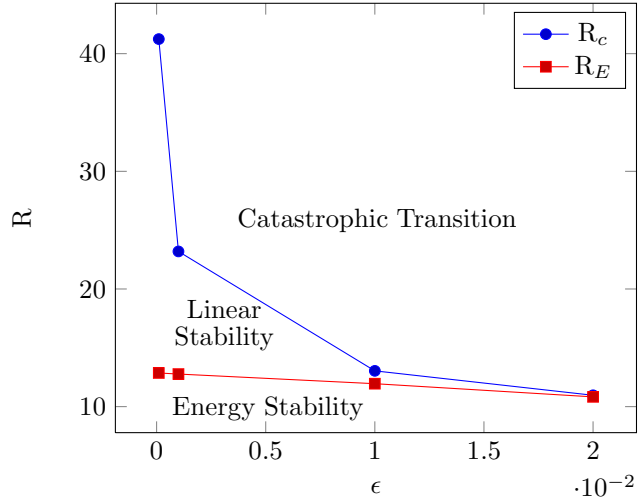
\begin{figure}[ht]
  \centering
  \begin{tikzpicture}
  \begin{axis}[
      xlabel=$\epsilon$,
      ylabel =R]
  \addplot coordinates {(0.0001, 41.24) (0.001, 23.191) (0.01, 13.0412) (0.02, 10.9663)};
  \addplot coordinates {(0.0001, 12.86) (0.001, 12.77) (0.01, 11.95) (0.02, 10.83)};
  \node at (50, -10) {Energy Stability};
  \node at (30, 75) {Linear};
  \node at (30, 50) {Stability};
  \node at (100, 150) {Catastrophic Transition};
  \legend{R$_c$,R$_E$}
  \end{axis}
  \end{tikzpicture}
  \caption{$\R_E$ and $R_c$ curves in the $\epsilon-\R$ plane.}
  \label{fig:RE vs Rc}
\end{figure}

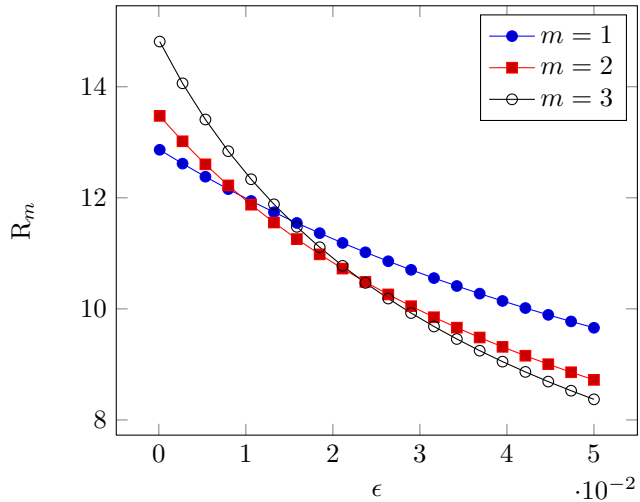
\begin{figure}[ht]
  \centering
  \begin{tikzpicture}
  \begin{axis}[
      xlabel=$\epsilon$,
      ylabel =R$_m$]
  \addplot coordinates {(0.0001, 12.8648) (0.00272632, 12.6159) (0.00535263,   12.3801) (0.00797895, 12.1562) (0.0106053, 11.9435) (0.0132316,   11.741) (0.0158579, 11.548) (0.0184842, 11.3638) (0.0211105, 11.1877) (0.0237368, 11.0193) (0.0263632, 10.8579) (0.0289895, 10.7032) (0.0316158, 10.5547) (0.0342421, 10.412) (0.0368684, 10.2747) (0.0394947, 10.1426) (0.0421211, 10.0153) (0.0447474, 9.89248) (0.0473737, 9.77399) (0.05, 9.65954)};

  \addplot coordinates {(0.0001, 13.4751) (0.00272632, 13.0184) (0.00535263,   12.6032) (0.00797895, 12.224) (0.0106053, 11.8759) (0.0132316,   11.5551) (0.0158579, 11.2582) (0.0184842, 10.9824) (0.0211105,   10.7255) (0.0237368, 10.4855) (0.0263632, 10.2605) (0.0289895,   10.0491) (0.0316158, 9.84999) (0.0342421, 9.66208) (0.0368684,   9.48434) (0.0394947, 9.31592) (0.0421211, 9.15603) (0.0447474, 9.00398) (0.0473737, 8.85916) (0.05, 8.72102)};

  \addplot[mark=o] coordinates {(0.0001, 14.8142) (0.00272632, 14.062) (0.00535263,   13.4105) (0.00797895, 12.8397) (0.0106053, 12.3345) (0.0132316, 11.8834) (0.0158579, 11.4775) (0.0184842, 11.1099) (0.0211105, 10.775) (0.0237368, 10.4682) (0.0263632, 10.1859) (0.0289895, 9.92498) (0.0316158, 9.68294) (0.0342421, 9.45761) (0.0368684, 9.24717) (0.0394947, 9.05006) (0.0421211, 8.86493) (0.0447474, 8.69064) (0.0473737, 8.52616) (0.05, 8.37062)};
  \legend{$m=1$,$m=2$,$m=3$}
  \end{axis}
  \end{tikzpicture}
  \caption{The plot of $\R_m$ vs $\epsilon$ for $m=1,2,3$.}
  \label{fig:Rm vs eps}
\end{figure}

\section{Concluding Remarks}

In this work, we considered both the energy stability and transitions of the Poiseuille flow of a second grade fluid
in an infinite circular pipe with the restriction that the flow is independent of the axial variable $z$.

We show that unlike the Newtonian ($\epsilon=0$) case, in the second grade model ($\epsilon \neq 0$ case),
the time independent base flow exhibits transitions as the Reynolds number $\R$ exceeds the critical threshold $\R_c \approx 4.124 \epsilon^{-1/4}$ where $\epsilon$ is a material constant measuring the relative strength of second order viscous effects compared to inertial effects.
At $\R=\R_c$ we prove that a transition takes place and that the type of the transition depends on a complex number $A$.
In particular depending on $A$, generically, either a continuous transition to a periodic solution or a catastrophic transition occurs where the bifurcation is subcritical.
The time period of the periodic solution approaches to infinity as $\R$ approaches $\R_c$, a phenomenon known
 as infinite period bifurcation.

We show that the number $A = \sum_{j=1}^{\infty} A_{0,j} + A_{6,j}$ where $A_{m,j}$ denotes the nonlinear interaction of the two critical modes with the mode having azimuthal wavenumber $m$ and radial wavenumber $j$.
Our numerical results suggest that for low $\epsilon$ ($\epsilon<1$), $A$ is approximated well by $A_{0,1}$.
That is, a single nonlinear interaction between the critical modes and the mode with azimuthal wavenumber $0$ and radial wavenumber $1$ dominates all the rest of interactions and hence determines the transition.

Our numerical results suggest that a catastrophic transition is preferred for low $\epsilon$ values ($\epsilon < 1$).
This means, an unstable time periodic solution bifurcates on $\R<\R_c$.
On $\R>\R_c$, the system has either metastable states or a local attractor far away from the base flow and a more complex dynamics emerges.

We also show that for $\R < \R_E$ with $\R_E < \R_c$, the Poiseuille flow is at least exponentially globally stable in the $H_0^1(\Omega)$ norm for the velocity.
We find that $\R_E \approx 12.87$ when $\epsilon=0$ and the gap between $\R_E$ and $\R_c$ diminishes quickly as $\epsilon$ is increased.

There are several directions in which this work can further be extended.

\textit{First}, in this work we consider a pipe with a circular cross section of unit radius and find that the first two critical modes have azimuthal wavenumber equal to $3$.
Increasing (or decreasing) the radius of the cross section will also increase (decrease) the azimuthal wavenumber of the critical modes.
The analysis in this case would be similar to our presentation except in the case where four critical modes with azimuthal wave numbers $m$ and $m+1$ become critical. In the case of four critical modes, more complex patterns will emerge due to the cross interaction of the critical modes \cite{sengul-rb3d, sengul-marangoni}.
An analysis in the light of \cite{sengul-rb2d} is required to determine transitions in this case of higher multiplicity criticality.

\textit{Second}, for the Reynolds number region between $\R_E$ and $\R_c$, there may be regions where the base flow is either globally stable but not exponentially attractive or regions where the domain of attraction of the base flow is not the whole space.
A conditional energy stability analysis is required to resolve these Reynolds number regimes \cite{straughan2013energy}.

\textit{Third}, the results we proved in this work for the second grade fluids can also be extended to fluids of higher grades and to other types of shear flows.

\textit{Fourth}, in this work we restricted attention to 2D flows.
In the expense of complicating computations and results, a similar analysis could be considered for 3D flows which depend also on the axial variable $z$.

\bibliography{poiseuille.bib}{}

\begin{thebibliography}{10}

\bibitem{baricz2015cross}
{\sc {\'A}.~Baricz, S.~Ponnusamy, and S.~Singh}, {\em Cross-product of {B}essel
  functions: monotonicity patterns and functional inequalities}, arXiv preprint
  arXiv:1507.01104,  (2015).

\bibitem{coleman1960}
{\sc B.~D. Coleman and W.~Noll}, {\em An approximation theorem for functionals,
  with applications in continuum mechanics}, Archive for Rational Mechanics and
  Analysis, 6 (1960), pp.~355--370.

\bibitem{sengul-marangoni}
{\sc H.~Dijkstra, T.~Sengul, and S.~Wang}, {\em {Dynamic transitions of surface
  tension driven convection}}, Physica D: Nonlinear Phenomena, 247 (2013),
  pp.~7--17.

\bibitem{dunn1995}
{\sc J.~Dunn and K.~Rajagopal}, {\em Fluids of differential type: critical
  review and thermodynamic analysis}, International Journal of Engineering
  Science, 33 (1995), pp.~689--729.

\bibitem{dunn1974}
{\sc J.~E. Dunn and R.~L. Fosdick}, {\em Thermodynamics, stability, and
  boundedness of fluids of complexity 2 and fluids of second grade}, Archive
  for Rational Mechanics and Analysis, 56 (1974), pp.~191--252.

\bibitem{fetecau2005}
{\sc C.~Fetecau and C.~Fetecau}, {\em Starting solutions for some unsteady
  unidirectional flows of a second grade fluid}, International Journal of
  Engineering Science, 43 (2005), pp.~781--789.

\bibitem{guptastability}
{\sc R.~K. Gupta and M.~Singh}, {\em Stability of stratified rotating
  viscoelastic {R}ivlin--{E}ricksen fluid in the presence of variable magnetic
  field}, Advances in Applied Science Research, 3 (2012), pp.~3253--3258.

\bibitem{Hayat20041621}
{\sc T.~Hayat, M.~Khan, A.~Siddiqui, and S.~Asghar}, {\em Transient flows of a
  second grade fluid}, International Journal of Non-Linear Mechanics, 39
  (2004), pp.~1621 -- 1633.

\bibitem{keener1981infinite}
{\sc J.~P. Keener}, {\em Infinite period bifurcation and global bifurcation
  branches}, SIAM Journal on Applied Mathematics, 41 (1981), pp.~127--144.

\bibitem{ptd}
{\sc T.~Ma and S.~Wang}, {\em Phase transition dynamics}, Springer-Verlag New
  York, 2014.

\bibitem{Massoudi2008199}
{\sc M.~Massoudi and T.~X. Phuoc}, {\em Pulsatile flow of blood using a
  modified second-grade fluid model}, Computers \& Mathematics with
  Applications, 56 (2008), pp.~199 -- 211.

\bibitem{ozer1999}
{\sc S.~{\"O}zer and E.~{\c{S}}uhubi}, {\em Stability of {P}oiseuille flow of
  an incompressible second-grade {R}ivlin-{E}ricksen fluid}, ARI-An
  International Journal for Physical and Engineering Sciences, 51 (1999),
  pp.~221--227.

\bibitem{passerini2000}
{\sc A.~Passerini and M.~C. Patria}, {\em Existence, uniqueness and stability
  of steady flows of second and third grade fluids in an unbounded
  ``pipe-like'' domain}, International Journal of Non-Linear Mechanics, 35
  (2000), pp.~1081--1103.

\bibitem{sengul-rb2d}
{\sc T.~Sengul, J.~Shen, and S.~Wang}, {\em Pattern formations of 2d
  {R}ayleigh--{B}{\'e}nard convection with no-slip boundary conditions for the
  velocity at the critical length scales}, Mathematical Methods in the Applied
  Sciences,  (2014).

\bibitem{sengul-rb3d}
{\sc T.~Sengul and S.~Wang}, {\em {Pattern formation in Rayleigh-B{\'e}nard
  convection}}, Communications in Mathematical Sciences, 11 (2013),
  pp.~315--343.

\bibitem{straughan2013energy}
{\sc B.~Straughan}, {\em The energy method, stability, and nonlinear
  convection}, vol.~91, Springer Science \& Business Media, 2013.

\bibitem{wan2008}
{\sc Z.~Wan, J.~Lin, and H.~Xiong}, {\em On the non-linear instability of fiber
  suspensions in a {P}oiseuille flow}, International Journal of Non-Linear
  Mechanics, 43 (2008), pp.~898--907.

\end{thebibliography}
\bibliographystyle{siam}

\end{document}